\theoremstyle{plain}
\newtheorem*{corollary}{Corollary}
\newtheorem{lemma}{Lemma}
\newtheorem{theorem}{Theorem}
\newtheorem*{conjecture}{Conjecture}
\theoremstyle{remark}
\newtheorem*{remark}{Remark}
\theoremstyle{definition}
\newtheorem{example}{Example}
\DeclareMathOperator{\Id}{Id}
\DeclareMathOperator{\id}{id}
\DeclareMathOperator{\ch}{char}
\DeclareMathOperator{\im}{im}
\DeclareMathOperator{\GL}{GL}
\DeclareMathOperator{\PGL}{PGL}
\DeclareMathOperator{\End}{End}
\DeclareMathOperator{\diag}{diag}
\DeclareMathOperator{\Alt}{Alt}
\DeclareMathOperator{\PIexp}{PIexp}
\begin{document}

\title{Algebras simple with respect to a Taft algebra action}

\author{A.\,S.~Gordienko}
\address{Vrije Universiteit Brussel, Belgium}
\email{alexey.gordienko@vub.ac.be} 

\keywords{Associative algebra, polynomial identity, skew-derivation, Taft algebra, $H$-module algebra, codimension.}

\begin{abstract}
Algebras simple with respect to an action of a Taft algebra $H_{m^2}(\zeta)$
deliver an interesting example of $H$-module algebras that are $H$-simple
but not necessarily semisimple.
We describe finite dimensional $H_{m^2}(\zeta)$-simple algebras and prove the analog of Amitsur's conjecture
for codimensions of their polynomial $H_{m^2}(\zeta)$-identities.
In particular, we show that the Hopf PI-exponent of an $H_{m^2}(\zeta)$-simple algebra $A$ 
over an algebraically closed field of characteristic $0$ equals $\dim A$.
The groups of automorphisms preserving the structure of an $H_{m^2}(\zeta)$-module algebra are studied as well.
\end{abstract}

\subjclass[2010]{Primary 16W22; Secondary 16R10, 16R50, 16T05, 16W25.}

\thanks{Supported by Fonds Wetenschappelijk Onderzoek~--- Vlaanderen Pegasus Marie Curie post doctoral fellowship (Belgium) and RFBR grant 13-01-00234a (Russia).}

\maketitle

The notion of an $H$-(co)module algebra is a natural generalization of the notion of a graded algebra,
an algebra with an action of a group by automorphisms, and an algebra with an action of a Lie algebra by derivations.
 In particular, if $H_{m^2}(\zeta)$ is the $m^2$-dimensional Taft algebra, an $H_{m^2}(\zeta)$-module
algebra is an algebra endowed both with an action of the cyclic group of order $m$ and with a skew-derivation satisfying certain conditions. The Taft algebra $H_4(-1)$ is called Sweedler's algebra.

The theory of gradings on matrix algebras and simple Lie algebras is a well developed area~\cite{ BahtKochMont, BahturinZaicevSeghalSimpleGraded}. Quaternion $H_4(-1)$-extensions and related crossed products
were considered in~\cite{DoiTakeuchi}. In~\cite{ASGordienko11}, the author classified all
finite dimensional $H_4(-1)$-simple algebras.
Here we classify finite dimensional $H_{m^2}(\zeta)$-simple algebras over an algebraically closed field (Sections~\ref{SectionTaftSimpleSemisimple}--\ref{SectionTaftSimpleNonSemisimple}).

Amitsur's conjecture on asymptotic behaviour of codimensions of ordinary polynomial
identities was proved by A.~Giambruno and M.\,V.~Zaicev~\cite[Theorem~6.5.2]{ZaiGia} in 1999.

Suppose an algebra is endowed with a grading, an action of a group $G$ by automorphisms and anti-automorphisms, an action of a Lie algebra by derivations or a structure of an $H$-module algebra for some Hopf algebra $H$. Then
it is natural to consider, respectively, graded, $G$-, differential or $H$-identities~\cite{BahtGiaZai, BahtZaiGradedExp, BahturinLinchenko,  BereleHopf, Kharchenko}.

The analog of Amitsur's conjecture for polynomial $H$-identities was proved under wide conditions
by the author in~\cite{ASGordienko8, ASGordienko9}. However, in those results the $H$-invariance of the Jacobson radical was required. 
Until now the algebras simple with respect to an action of $H_4(-1)$ were the only example where the analog of Amitsur's conjecture was proved for an $H$-simple
non-semisimple algebra~\cite{ASGordienko11}.
 In this article we prove the analog of Amitsur's conjecture for all finite dimensional $H_{m^2}(\zeta)$-simple algebras not necessarily semisimple (Section~\ref{SectionTaftSimpleAmitsur}) assuming that the base field is algebraically closed and of characteristic $0$.

\section{Introduction}

An algebra $A$
over a field $F$
is an \textit{$H$-module algebra}
for some Hopf algebra $H$
if $A$ is endowed with a homomorphism $H \to \End_F(A)$ such that
$h(ab)=(h_{(1)}a)(h_{(2)}b)$
for all $h \in H$, $a,b \in A$. Here we use Sweedler's notation
$\Delta h = h_{(1)} \otimes h_{(2)}$ where $\Delta$ is the comultiplication
in $H$.
We refer the reader to~\cite{Danara, Montgomery, Sweedler}
   for an account
  of Hopf algebras and algebras with Hopf algebra actions.

Let $A$ be an $H$-module algebra for some Hopf algebra $H$ over a field $F$.
We say that $A$ is \textit{$H$-simple} if $A^2\ne 0$ and $A$ has no non-trivial
two-sided $H$-invariant ideals. 

Let $m \geqslant 2$ be an integer and let $\zeta$ be a primitive $m$th root of unity
in a field $F$. (Such root exists in $F$ only if $\ch F \nmid m$.)
Consider the algebra $H_{m^2}(\zeta)$ with unity generated
by elements $c$ and $v$ satisfying the relations $c^m=1$, $v^m=0$, $vc=\zeta cv$.
Note that $(c^i v^k)_{0 \leqslant i, k \leqslant m-1}$ is a basis of $H_{m^2}(\zeta)$.
We introduce on $H_{m^2}(\zeta)$ a structure of a coalgebra by
  $\Delta(c)=c\otimes c$,
$\Delta(v) = c\otimes v + v\otimes 1$, $\varepsilon(c)=1$, $\varepsilon(v)=0$.
Then $H_{m^2}(\zeta)$ is a Hopf algebra with the antipode $S$ where $S(c)=c^{-1}$
and $S(v)=-c^{-1}v$. The algebra $H_{m^2}(\zeta)$ is called a \textit{Taft algebra}.

\begin{remark}
Note that if $A$ is an $H_{m^2}(\zeta)$-module algebra, then the group $\langle c \rangle \cong \mathbb Z_m$
is acting on $A$ by automorphisms. Every algebra $A$ with a $\mathbb Z_m$-action by automorphisms
is a $\mathbb Z_m$-graded algebra: $$A^{(i)} = \lbrace a \in A \mid ca = \zeta^i a\rbrace,$$
$A^{(i)}A^{(k)}\subseteq A^{(i+k)}$. Conversely, if $A = \bigoplus_{i=0}^{m-1} A^{(i)}$ is a $\mathbb Z_m$-graded algebra,
then $\mathbb Z_m$ is acting on $A$ by automorphisms: $c a^{(i)} = \zeta^i a^{(i)}$
for all $a^{(i)} \in A^{(i)}$. Moreover, the notions of $\mathbb Z_m$-simple and simple $\mathbb Z_m$-graded algebras are equivalent.
\end{remark}

\begin{remark}
\cite[Theorems~5 and~6]{BahturinZaicevSeghalGroupGrAssoc} imply that every $\mathbb Z_m$-grading
on $M_n(F)$, where $F$ is an algebraically closed field, is, up to a conjugation, \textit{elementary}, i.e. there exist $g_1, g_2, \ldots, g_n \in \mathbb Z_m$ such that each matrix unit $e_{ij}$ belongs to $A^{(g_i^{-1} g_j)}$. Rearranging rows and columns, we may assume that every $\mathbb Z_m$-action on
$M_n(F)$ is defined by $c a = Q^{-1} a Q$ for some matrix $$Q=\diag\{\underbrace{1, \ldots, 1}_{k_0},
\underbrace{\zeta, \ldots, \zeta}_{k_1}, \ldots, \underbrace{\zeta^{m-1}, \ldots, \zeta^{m-1}}_{k_{m-1}}\}.$$
\end{remark}

\section{Semisimple $H_{m^2}(\zeta)$-simple algebras}\label{SectionTaftSimpleSemisimple}

In this section we treat the case when an $H_{m^2}(\zeta)$-simple algebra $A$ is semisimple.

\begin{theorem}\label{TheoremTaftSimpleSemisimple}
Let $A$ be a semisimple $H_{m^2}(\zeta)$-simple algebra over an algebraically closed field~$F$. Then
 $$A \cong \underbrace{M_k(F) \oplus M_k(F) \oplus \dots \oplus M_k(F)}_t\qquad \text{(direct sum of ideals)}$$ for some $k,t\in\mathbb N$, $t \mid m$,
and there exist $P \in M_k(F)$ and $Q \in \GL_k(F)$  where  $Q^{\frac{m}{t}} = E_k$, $E_k$ is the identity matrix $k\times k$, $Q P Q^{-1}=\zeta^{-t} P$, $P^m = \alpha E_k$ for some $\alpha \in F$, such that
\begin{equation}\label{EqSSTaftSimple1} c\, (a_1, a_2, \ldots, a_t) = (Q a_t Q^{-1}, a_1, \ldots, a_{t-1}),
\end{equation}\begin{equation}\label{EqSSTaftSimple2} v\,(a_1, a_2, \ldots, a_t)=(Pa_1 - (Q a_t Q^{-1}) P, \zeta (P a_2 - a_1 P), \ldots, \zeta^{t-1}(Pa_t - a_{t-1} P))\end{equation}
for all $a_1, a_2, \ldots, a_t \in M_k(F)$.
\end{theorem}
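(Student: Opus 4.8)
The plan is to exploit the fact that the group $\langle c\rangle\cong\mathbb Z_m$ acts on $A$ by automorphisms, so $A$ becomes a $\mathbb Z_m$-graded algebra, and $v$ is a skew-derivation ``twisting'' this grading. First I would use classical Wedderburn–Artin theory together with the $H$-simplicity to pin down the underlying algebra: since $A$ is semisimple, $A\cong M_{k_1}(F)\oplus\cdots\oplus M_{k_s}(F)$; the group $\langle c\rangle$ permutes the simple blocks, and because no proper $\langle c\rangle$-invariant — hence no proper $H$-invariant — ideal may exist, $\langle c\rangle$ must permute the blocks transitively. Transitivity forces all $k_i$ equal to a common $k$, and the number $t$ of blocks to divide $m$ (it is the index of the stabilizer of a block). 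Choosing a generator orbit, one may relabel the blocks $a_1,\dots,a_t$ so that $c$ sends block $i$ to block $i+1$ cyclically, with $c^t$ acting on the first block as an automorphism of $M_k(F)$; by Skolem–Noether this automorphism is $a\mapsto Qa Q^{-1}$ for some $Q\in\GL_k(F)$, and since $c^m=1$ we get $Q^{m/t}=E_k$ after absorbing scalars. This yields~\eqref{EqSSTaftSimple1}.

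Next I would analyze $v$. Since $v(ab)=(ca)(vb)+(va)b$ (from $\Delta v = c\otimes v+v\otimes 1$), the map $v$ is a $(c,1)$-skew-derivation of $A$. Restricted to each simple block it is a skew-derivation into $A$; using $vc=\zeta cv$ one checks $v$ shifts the $\mathbb Z_m$-grading degree by a fixed amount, so $v$ maps block $i$ into block $i$ (after the cyclic relabelling this is the natural target). A skew-derivation of a matrix algebra that is ``inner up to the twist'' — more precisely, every $(\sigma,1)$-derivation $D$ of $M_k(F)$ with $\sigma=\mathrm{Int}(g)$ has the form $D(a)=Pa-\sigma(a)P$ for some $P$, because $H^1$ of a matrix algebra with coefficients in the bimodule twisted by an inner automorphism vanishes — lets me write the first component of $v$ as $Pa_1-(Qa_tQ^{-1})P$. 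Then I would propagate $P$ across the blocks using the compatibility between $v$ and $c$ (the relation $vc=\zeta cv$ applied to elements), which forces the $j$-th component of $v$ to be $\zeta^{j-1}(Pa_j-a_{j-1}P)$ and imposes $QPQ^{-1}=\zeta^{-t}P$. Finally, $v^m=0$ translated through this formula gives a telescoping product that collapses to the condition $P^m=\alpha E_k$ for a scalar $\alpha$ (indeed $v^m$ acting on a block is, up to a scalar, $a\mapsto P^m a - a P^m$ composed around the cycle, whose vanishing together with centrality forces $P^m$ central).

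The main obstacle I anticipate is the bookkeeping in the middle step: correctly tracking how $v$ interacts with the cyclic permutation of blocks and extracting the precise powers of $\zeta$ and the conjugation relation $QPQ^{-1}=\zeta^{-t}P$ from $vc=\zeta cv$, while simultaneously using the freedom to conjugate $A$ by block-diagonal and block-permutation matrices to normalize $Q$ and $P$ into the stated form. A secondary subtlety is ensuring the normalization $Q^{m/t}=E_k$ (not merely scalar) can be achieved without disturbing the relation between $P$ and $Q$; this requires checking that the residual scaling freedom in choosing $Q$ and $P$ is enough, which it is because rescaling $Q$ by an $(m/t)$-th root of unity and correspondingly adjusting $\alpha$ leaves all the displayed relations intact. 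Conversely one should verify that any data $(k,t,P,Q)$ satisfying the listed constraints does define an $H_{m^2}(\zeta)$-module algebra structure, i.e. that~\eqref{EqSSTaftSimple1}–\eqref{EqSSTaftSimple2} are compatible with all the defining relations $c^m=1$, $v^m=0$, $vc=\zeta cv$ and the module-algebra axioms — a direct but routine verification.
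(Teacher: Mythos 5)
Your route is essentially the one the paper takes (Wedderburn decomposition plus transitivity of the $\langle c\rangle$-action, Skolem--Noether for $c^t$, innerness of the skew-derivation, the relation $vc=\zeta cv$, and finally $v^m=0$), but there is one genuine logical gap at the very first step. You argue that $\langle c\rangle$ must permute the Wedderburn blocks transitively ``because no proper $\langle c\rangle$-invariant --- hence no proper $H$-invariant --- ideal may exist.'' The implication goes the wrong way: $H$-invariance of an ideal means invariance under both $c$ and $v$, so $H$-simplicity only forbids ideals that are simultaneously $c$- and $v$-stable, whereas a $\langle c\rangle$-orbit of blocks spans a $c$-stable ideal with no a priori reason to be $v$-stable. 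What saves the argument (and what the paper proves at the start of its Lemma~\ref{LemmaTaftSimpleSemisimpleFirst}) is that every $c$-stable ideal $B$ of a semisimple algebra is automatically $v$-stable: $B$ is a direct summand with a unit $1_B$ satisfying $c1_B=1_B$, and $vb=v(1_Bb)=(c1_B)(vb)+(v1_B)b=1_B(vb)+(v1_B)b\in B$. You need this observation (or an equivalent one) before you can conclude transitivity, and hence $t\mid m$ and the equality of all block sizes.

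Two smaller points. The claim that ``$v$ maps block $i$ into block $i$'' is false as stated: your own target formula~(\ref{EqSSTaftSimple2}) shows that the $i$th component of $v(a_1,\ldots,a_t)$ involves both $a_i$ and $a_{i-1}$, so $v$ sends block $i$ into block $i$ plus block $i+1$ (cyclically). The correct statement, which the paper extracts by evaluating $v$ on products of the block idempotents, is that exactly these two components survive; this is where the matrices $P_i$ and the powers of $\zeta$ come from. Finally, the ``telescoping'' collapse of $v^m$ hides the actual mechanism: one proves by induction a $q$-binomial expansion of $v^\ell$ whose $j$th term carries the coefficient $\binom{\ell}{j}_{\zeta^{-1}}$, and then uses the vanishing $\binom{m}{j}_{\zeta^{-1}}=0$ for $0<j<m$ (the $q$-analogue of the freshman's dream at a primitive $m$th root of unity) to reduce $v^m$ to $a\mapsto[P^m,a]$ on each block; without identifying this vanishing, the reduction to ``$P^m$ central'' is not justified. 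With these repairs your plan coincides with the paper's proof.
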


\begin{remark}
Diagonalizing $Q$, we may assume that $$Q = \diag\lbrace\underbrace{1,\ldots,1}_{k_1},
\underbrace{\zeta^t,\ldots,\zeta^t}_{k_2}, \dots, \underbrace{\zeta^{t\left(\frac{m}{t}-1\right)},\ldots,\zeta^{t\left(\frac{m}{t}-1\right)}}_{k_{\frac{m}{t}}}\rbrace$$ 
for some $k_1,\ldots,k_{\frac{m}{t}}\in \mathbb Z_+$, $k_1+\ldots+k_{\frac{m}{t}}=k$.
Now $QPQ^{-1} = \zeta^{-1} P$ imply
that $P=(P_{ij})$ is a block matrix where $P_{ij}$ is an matrix $k_{i-1} \times k_{j-1}$ and
$P_{ij} = 0$ for all $j \ne i+1$ and $(i,j)\ne \left(\frac{m}{t}, 1\right)$.
\end{remark}

We begin with three auxiliary lemmas.  In the first two, we prove all the assertions of Theorem~\ref{TheoremTaftSimpleSemisimple} except $P^m=\alpha E_k$. In Lemma~\ref{LemmaTaftSimpleMatrix}
we treat the case when $A$ isomorphic to a full matrix algebra.

\begin{lemma}\label{LemmaTaftSimpleMatrix}
Let $A$ be an $H_{m^2}(\zeta)$-module algebra over an algebraically closed field $F$, isomorphic as an algebra to $M_k(F)$ for some $k\in \mathbb N$.
Then there exist matrices $P\in M_k(F)$, $Q\in \GL_k(F)$, $Q^m=E_k$  such that $Q P Q^{-1}=\zeta^{-1} P$ and
$A$ is isomorphic as an $H_{m^2}(\zeta)$-module algebra to $M_k(F)$ with the following
$H_{m^2}(\zeta)$-action: $ca=QaQ^{-1}$ and $va=Pa-(QaQ^{-1})P$ for all $a\in M_k(F)$.
\end{lemma}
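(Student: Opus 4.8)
First I would unpack the defining data of an $H_{m^2}(\zeta)$-module algebra structure on $A \cong M_k(F)$. The element $c$ acts by an algebra automorphism of $M_k(F)$, and since $F$ is algebraically closed every such automorphism is inner (Skolem–Noether): $ca = QaQ^{-1}$ for some $Q \in \GL_k(F)$, determined up to a scalar. The relation $c^m = 1$ forces $c^m$ to act as the identity automorphism, so $Q^m$ is a scalar matrix; rescaling $Q$ by an $m$th root of that scalar, we may assume $Q^m = E_k$. This handles $c$.

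Next I would analyze $v$. The relation $\Delta v = c \otimes v + v \otimes 1$ translates to $v(ab) = (ca)(vb) + (va)b$ for all $a,b \in M_k(F)$, i.e. $v$ is a $c$-skew-derivation of $M_k(F)$. The key point is that every such skew-derivation of a matrix algebra is inner: I would show that the map $a \mapsto v(a) - (ca)P + aP$... more precisely, one checks that for any fixed $P$ the map $a \mapsto (ca)P - aP = (QaQ^{-1})P - aP$ is a $c$-skew-derivation, and that the space of $c$-skew-derivations modulo these is a suitable Hochschild-type cohomology group which vanishes for $M_k(F)$. Concretely, evaluating $v$ on matrix units and using $v(ab) = (ca)(vb) + (va)b$ together with $v(1) = 0$ (which follows from $\varepsilon(v) = 0$, or directly from $v(1 \cdot 1) = (c1)(v1) + (v1)1 = 2v(1)$... wait, that gives $v(1) = 0$ only in characteristic $\neq 2$; better to use $v(1) = v(1\cdot 1) = (c1)(v1)+(v1)1$ hence $v(1) = v(1) + v(1)$ is wrong since $c1 = 1$, so actually $v(1) = v(1) + v(1)$, giving $v(1)=0$ always is false — instead I'd note $1 = 1^m$ and... cleanest is: since $c$ acts as an automorphism $c1 = 1$, and then $v(1) = v(1\cdot1) = 1\cdot v(1) + v(1)\cdot 1 = 2v(1)$, so $v(1) = 0$ provided $\operatorname{char} F \neq 2$; in general one argues $v$ kills the identity because $v(1) = v(1^2)$ iterated, or simply absorbs it). I expect the honest route is the standard averaging/cohomological argument: define $P$ by a trace-type formula so that $va - (Pa - (QaQ^{-1})P)$ is a skew-derivation vanishing appropriately, and conclude it is zero.

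Then I would impose the remaining Taft relations. Having written $va = Pa - (QaQ^{-1})P$, the relation $vc = \zeta cv$ in $H_{m^2}(\zeta)$ must hold as operators on $A$: computing $v(ca) = v(QaQ^{-1})$ and $\zeta\, c(va)$ and equating them for all $a$ forces $QPQ^{-1} = \zeta^{-1}P$ (the shift in the "$P$ on the left" term produces exactly the factor $\zeta$, while the conjugated term matches automatically). This gives the stated conjugation relation. The relation $v^m = 0$ I would not need for this lemma — indeed the lemma only asserts $QPQ^{-1} = \zeta^{-1}P$ and $Q^m = E_k$, not $P^m = \alpha E_k$, consistent with the remark that $P^m = \alpha E_k$ is deferred to Lemma~2. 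Finally I would assemble the isomorphism of $H_{m^2}(\zeta)$-module algebras: the identity map $M_k(F) \to M_k(F)$ is an algebra isomorphism intertwining the original and the constructed actions, since by construction both $c$ and $v$ act by the described formulas.

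**Main obstacle.** The crux is showing that the skew-derivation $v$ is inner of the specific form $a \mapsto Pa - (QaQ^{-1})P$ — that is, that the relevant first cohomology vanishes for the matrix algebra. One must be careful that this is genuinely a "twisted inner derivation" statement: the bimodule structure on $M_k(F)$ is twisted on the left by the automorphism $c$, so the vanishing needed is $H^1$ of $M_k(F)$ with coefficients in the bimodule ${}^c M_k(F)$, which is still a free rank-one module over the separable algebra $M_k(F)$ and hence has vanishing $H^1$; but spelling this out without machinery, via explicit evaluation on matrix units, requires a careful bookkeeping of which $\zeta$-powers appear. Once innerness is in hand, the rest is a direct translation of the Hopf-algebra relations into matrix identities.
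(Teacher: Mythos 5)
Your treatment of $c$ (Skolem--Noether, rescale $Q$ so that $Q^m=E_k$) matches the paper. For the innerness of the skew-derivation $v$ you gesture at a cohomological argument ($H^1$ of $M_k(F)$ with coefficients in the twisted bimodule ${}^c M_k(F)$ vanishes by separability); that route is valid in principle, but you leave it as the acknowledged ``main obstacle'' rather than carrying it out. The paper's argument is both shorter and completely elementary: since $ca=QaQ^{-1}$, the map $a\mapsto Q^{-1}(va)$ satisfies the ordinary Leibniz rule, hence is an inner derivation $a\mapsto P_0a-aP_0$, and then $va=Pa-(QaQ^{-1})P$ with $P=QP_0$. (Your detour about $v(1)=0$ is confused but irrelevant: $v(1)=2v(1)$ gives $v(1)=0$ in any characteristic, and the fact is not needed anyway.)

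The genuine gap is in your last step. You assert that the relation $vc=\zeta cv$ ``forces $QPQ^{-1}=\zeta^{-1}P$ \ldots\ automatically.'' It does not. Writing out $v(ca)=\zeta\,c(va)$ for all $a$ only yields that $Q^{-1}P-\zeta PQ^{-1}$ commutes with every $a\in M_k(F)$, hence equals $\alpha E_k$ for some $\alpha\in F$. Indeed, $P$ is determined by $v$ only up to adding a scalar multiple of $Q$ (replacing $P$ by $P+\lambda Q$ leaves $a\mapsto Pa-(QaQ^{-1})P$ unchanged), and for a generic representative the relation $QPQ^{-1}=\zeta^{-1}P$ fails: if it holds for $P$, it fails for $P+\lambda Q$ with $\lambda\neq 0$. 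The missing step is the normalization: replace $P$ by $P-\frac{\alpha}{1-\zeta}Q$, which does not change the action of $v$ but kills the scalar $\alpha$ and produces the exact relation $QPQ^{-1}=\zeta^{-1}P$. This is a one-line fix, but as written your argument does not establish the conclusion of the lemma for the $P$ your innerness argument produces.
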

\begin{proof} All automorphisms of full matrix algebras are inner. Hence $ca=QaQ^{-1}$
for some  $Q\in \GL_k(F)$. Since $c^m = 1$, the matrix $Q^m$ is scalar. Multiplying $Q$ by the $m$th root
of the corresponding scalar, we may assume that $Q^m = E_k$.

 Recall that $v$ is acting on $A$ by a skew-derivation.
We claim\footnote{This result is a ``folklore'' one. I am grateful to V.\,K.~Kharchenko who informed me of a simple proof of it.} that this skew-derivation is \textit{inner},  i.e. there exists a matrix $P \in A$ such that
$va = Pa-(ca)P$ for all $a\in A$. Indeed, $$Q^{-1}(v(ab)) = Q^{-1}((ca)(vb)+(va)b)
= Q^{-1}((QaQ^{-1})(vb)+(va)b)=a(Q^{-1}(vb))+(Q^{-1}(va))b$$
for all $a,b \in A$. Hence $Q^{-1}(v(\cdot))$
is a derivation and $Q^{-1}(va)=P_0a-aP_0$
for all $a\in A$ for some $P_0 \in A$.
 Thus $$va = QP_0a- Q a P_0= QP_0 a - Q aQ^{-1}QP_0= Pa-(QaQ^{-1})P
 \text{ for all }a\in A$$ where $P=QP_0$, i.e. $v$ acts as an inner skew-derivation.
 
 Note that $vc=\zeta cv$ implies $c^{-1}v=\zeta v c^{-1}$,
$$Q^{-1}(Pa-(QaQ^{-1})P)Q = \zeta P(Q^{-1}aQ)-aP,$$ 
$$Q^{-1}PaQ-aQ^{-1}PQ = \zeta PQ^{-1}aQ-\zeta aP,$$ 
$$Q^{-1}Pa-aQ^{-1}P = \zeta PQ^{-1}a-\zeta aPQ^{-1},$$ 
$$Q^{-1}Pa-\zeta PQ^{-1}a = aQ^{-1}P-\zeta aPQ^{-1},$$ 
$$(Q^{-1}P-\zeta PQ^{-1})a = a(Q^{-1}P-\zeta PQ^{-1}) \text{ for all } a\in A.$$
Hence $Q^{-1}P-\zeta PQ^{-1} = \alpha E_k$ for some $\alpha \in F$. Now we replace $P$ with $(P-\frac{\alpha}{1-\zeta} Q)$.
Then $v$ is the same but $Q^{-1}P-\zeta PQ^{-1} = 0$ and $Q PQ^{-1} = \zeta^{-1} P$.
\end{proof}

Here we treat the general case.

\begin{lemma}\label{LemmaTaftSimpleSemisimpleFirst}
Let $A$ be a semisimple $H_{m^2}(\zeta)$-simple algebra over an algebraically closed field $F$. Then
 $A \cong \underbrace{M_k(F) \oplus M_k(F) \oplus \dots \oplus M_k(F)}_t$ (direct sum of ideals) for some $k, t \in\mathbb N$, $t \mid m$,
and there exist $P \in M_k(F)$ and $Q \in \GL_k(F)$, $Q^{\frac{m}{t}} = E_k$, $Q P Q^{-1}=\zeta^{-t} P$, such that~(\ref{EqSSTaftSimple1}) and~(\ref{EqSSTaftSimple2}) hold for all $a_1, a_2, \ldots, a_t \in M_k(F)$.
\end{lemma}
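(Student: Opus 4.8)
The plan is to exploit the fact that $A$, being semisimple, is a direct sum of simple ideals $M_k(F)$, and to use the $H$-module structure to show these ideals are all isomorphic in size, are permuted cyclically by $c$, and that $t\mid m$. First I would recall that any two-sided ideal of $A$ that is $c$-invariant and closed under $v$ would be an $H$-invariant ideal, hence trivial by $H$-simplicity; so the Wedderburn decomposition $A=\bigoplus_{j=1}^s B_j$ into simple ideals must be ``mixed'' by the $H$-action. Since $\langle c\rangle\cong\mathbb Z_m$ acts by automorphisms, it permutes the $B_j$; the sum of any orbit of simple components together with all their $v$-translates is $H$-invariant, so there is a single $\langle c\rangle$-orbit under a ``partial'' action once we also bring in $v$. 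The cleanest route: let $e_1,\dots,e_s$ be the central primitive idempotents; $c$ permutes them, and one checks $v e_j$ lies in $\bigoplus$ of components $B_i$ with $c e_i$ in the $c$-orbit of $e_j$ (using $\Delta v=c\otimes v+v\otimes 1$ and that $v(e_je_j)=(ce_j)(ve_j)+(ve_j)e_j$, which pins down the grading components where $ve_j$ can live). From this I would deduce that the span of one $\langle c\rangle$-orbit of the $e_j$ is already $H$-invariant, forcing $\langle c\rangle$ to act transitively on $\{e_1,\dots,e_s\}$; hence $s=t$ divides $m$ and all $B_j\cong M_k(F)$ for a common $k$.

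Next I would set up coordinates. Choosing a generator of the cyclic permutation, label the ideals $B_1,\dots,B_t$ so that $c B_i = B_{i-1}$ (indices mod $t$), which gives the shift shape of~(\ref{EqSSTaftSimple1}); since $c^t$ preserves each $B_i$ and acts by an automorphism of $M_k(F)$, it is inner there, $c^t a = Q_i a Q_i^{-1}$, and $c^m=\id$ forces $Q_i^{m/t}$ scalar, normalizable to $E_k$; conjugating the identifications $B_i\cong M_k(F)$ appropriately I can arrange a single $Q:=Q_t$ governing the wrap-around term $Qa_tQ^{-1}$ in~(\ref{EqSSTaftSimple1}) and all other $c$-maps between consecutive blocks being the identity map of $M_k(F)$. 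For the skew-derivation part, I would restrict attention to $c^t$ and $v^t$-type data on $B_t$: the key observation is that $v$ maps $B_i$ into $B_{i-1}\oplus B_i$ (from the coproduct, exactly as in the idempotent computation), so composing suitably, $v$ on the block $B_1$ is, up to the shift, an inner skew-derivation for the automorphism $c|_{B_1}$; invoking Lemma~\ref{LemmaTaftSimpleMatrix} (applied to the $H'$-module structure on $B_t$ where $H'$ is generated by $c^t$ and a suitable multiple of $v^t$, or more directly by a folklore inner-skew-derivation argument) produces the matrix $P$ with $v a_1 = P a_1 - (c a_1)P$ on the appropriate component, and the relation $vc=\zeta cv$ propagated along the cycle yields the factors $\zeta^{i-1}$ and the single relation $QPQ^{-1}=\zeta^{-t}P$ exactly as the $\zeta$-twist in Lemma~\ref{LemmaTaftSimpleMatrix} becomes $\zeta^{-t}$ after $t$ applications.

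Concretely the steps in order are: (1) reduce to showing $\langle c\rangle$ acts transitively on the simple components via the $H$-invariance of orbit-sums, using $\Delta v$ to control where $v e_j$ lies; (2) conclude $A\cong M_k(F)^{\oplus t}$, $t\mid m$, and fix the cyclic labelling giving the shift form of~(\ref{EqSSTaftSimple1}); (3) analyze $c$ on each pair of consecutive blocks, use that $c^t$ is inner on each block and $c^m=\id$ to normalize to a single matrix $Q$ with $Q^{m/t}=E_k$; (4) use $\Delta v=c\otimes v+v\otimes 1$ to show $v(B_i)\subseteq B_{i-1}\oplus B_i$ and extract that the ``diagonal part'' of $v$ is a skew-derivation, hence inner by the folklore argument / Lemma~\ref{LemmaTaftSimpleMatrix}, giving $P$; (5) expand $v$ on a general tuple using the derivation property and the block decomposition to obtain~(\ref{EqSSTaftSimple2}) with the $\zeta^{i-1}$ weights; (6) push the relation $vc=\zeta cv$ through to get $QPQ^{-1}=\zeta^{-t}P$. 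The main obstacle I anticipate is step (4)--(5): carefully tracking how the coproduct distributes $v$ across a direct sum of ideals and identifying precisely which off-diagonal component of $v(B_i)$ survives, then bookkeeping the cascade of $\zeta$-powers so that they match~(\ref{EqSSTaftSimple2}) and collapse to the single clean relation $QPQ^{-1}=\zeta^{-t}P$ rather than a family of relations; the transitivity argument in step (1) also needs care to ensure one genuinely uses both $c$ and $v$ (a pure $\langle c\rangle$-orbit need not be $v$-invariant, so the invariant ideal one builds must include $v$-images, and one must check this does not spill outside the orbit).
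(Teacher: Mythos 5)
Your steps (1)--(3) are essentially the paper's argument: the sum $B$ of a $\langle c\rangle$-orbit of simple components is a $c$-invariant ideal whose unity $1_B$ is fixed by $c$, so $vb=v(1_Bb)=(c1_B)(vb)+(v1_B)b=1_B(vb)+(v1_B)b\in B$; hence $B$ is $H_{m^2}(\zeta)$-invariant and equals $A$, which forces transitivity, $A\cong M_k(F)^{\oplus t}$ with $t\mid m$, and the normalization of $Q$ goes exactly as you describe. The genuine gap is in your steps (4)--(5), which is where $P$ and formula~(\ref{EqSSTaftSimple2}) actually have to come from. Writing $\rho_{ij}(a)=\pi_i(v(0,\dots,a,\dots,0))$, the Leibniz rule gives $\rho_{jj}(ab)=\rho_{jj}(a)b$ and $\rho_{j+1,j}(ab)=(ca)\rho_{j+1,j}(b)$ (suitably interpreted at the wrap-around). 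So the ``diagonal part'' of $v$ on a block is \emph{not} a skew-derivation: it is left multiplication by $P_j:=\rho_{jj}(E_k)$, and the off-diagonal part is right multiplication by $Q_j:=\rho_{j,j-1}(E_k)$. Your proposed reduction to Lemma~\ref{LemmaTaftSimpleMatrix} via an ``$H'$ generated by $c^t$ and a multiple of $v^t$ acting on one block'' does not work either: $v^t$ does not preserve a block ($v(B_j)\subseteq B_j\oplus B_{j+1}$ only), and its block components cannot be computed without already knowing the structure of $v$ that you are trying to establish; moreover $\zeta^{t^2}$ need not be a primitive $(m/t)$-th root of unity, so the pair $(c^t,v^t)$ need not generate a Taft algebra.

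What actually closes the argument in the paper is elementary but must be done: from $v(e_ie_{i+1})=0$ one gets $Q_{i+1}=-P_{i+1}$, so the $j$-th component of $v(a_1,\dots,a_t)$ is $P_ja_j-a_{j-1}P_j$; then applying $vc=\zeta cv$ to the idempotents $(0,\dots,E_k,\dots,0)$ yields $P_{i+1}=\zeta P_i$ and $P_1=\zeta QP_tQ^{-1}$, which together give the weights $\zeta^{j-1}$ in~(\ref{EqSSTaftSimple2}) and collapse to the single relation $QPQ^{-1}=\zeta^{-t}P$. You correctly anticipate that the bookkeeping of $\zeta$-powers is the delicate point, but the mechanism you propose for producing $P$ (inner skew-derivation of the diagonal part) is the wrong one, so this portion of the proof is not merely unfinished but would have to be replaced by the left/right-multiplication analysis above.
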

\begin{proof}
If $A$ is semisimple, then $A$ is the direct sum of $\mathbb Z_m$-simple subalgebras.
Let $B$ be one of such subalgebras.
Then $vb= v(1_B b)=(c1_B)(vb)+(v1_B)b \in B$
for all $b\in B$. Hence $B$ is an $H_{m^2}(\zeta)$-submodule, $A=B$,
and $A$ is a $\mathbb Z_m$-simple algebra.
Therefore,  $A \cong \underbrace{M_k(F) \oplus M_k(F) \oplus \dots \oplus M_k(F)}_t$ (direct sum of ideals) for some $k,t \in\mathbb N$, $t \mid m$,
and $c$ maps the $i$th component to the $(i+1)$th. 

In the case $t=1$, the assertion is proved in Lemma~\ref{LemmaTaftSimpleMatrix}. 
Consider the case $t\geqslant 2$.
Note that $c^t$ maps each component onto itself.
Since every automorphism of the matrix algebra is inner,
there exist $Q$ such that $c^t(a, 0, \ldots, 0) = (QaQ^{-1}, 0, \ldots, 0)$
for any $a \in M_k(F)$. Now $c^m = \id_A$ implies that $Q^{\frac{m}{t}}$ is a scalar matrix
and we may assume that $Q^{\frac{m}{t}} = E_k$ since the field $F$ is algebraically closed
and we can multiply $Q$ by the $m$th root of the corresponding scalar.
Therefore, we may assume that~(\ref{EqSSTaftSimple1}) holds.

Let $\pi_i \colon A \to M_k(F)$ be the natural projections on the $i$th component. Consider
 $\rho_{ij} \in \End_F(M_k(F))$, $1 \leqslant i,j\leqslant t$,
defined by $\rho_{ij}(a):= \pi_i(v\,(\underbrace{0, \ldots, 0}_{j-1}, a,0, \ldots, 0))$ for $a \in M_k(F)$.
Then \begin{equation*}\begin{split}\rho_{ij}(ab)=\pi_i(v\,(0,\ldots, 0, ab,0, \ldots, 0))=\\
\pi_i(v((0,\ldots, 0, a,0, \ldots, 0)(0,\ldots, 0, b,0, \ldots, 0)))=\\
\pi_i((c(0,\ldots, 0, a,0, \ldots, 0))v(0,\ldots, 0, b,0, \ldots, 0))+ \\
\pi_i((v(0,\ldots, 0, a,0, \ldots, 0))(0,\ldots, 0, b,0, \ldots, 0))= \\
\delta_{ij}\,\rho_{ii}(a)b+\delta_{j, i-1}\,a\rho_{i,i-1}(b)+\delta_{i1}\delta_{jt}\,QaQ^{-1}\rho_{1t}(b)
\end{split}\end{equation*}
for all $a,b \in M_k(F)$ where $\delta_{ij}$ is the Kronecker delta.

Let $\rho_{ii}(E_k)=P_i$, $\rho_{i, i-1}(E_k)=Q_i$, $\rho_{1t}(E_k)=Q_1$ where
$P_i, Q_i \in M_k(F)$.
Then \begin{equation*}\begin{split}v(a_1, \ldots, a_m)=(\pi_1(v\,(a_1,\ldots, a_t)), \ldots, \pi_1(v\,(a_1,\ldots, a_t)))= \\
(P_1 a_1 + (Qa_tQ^{-1}) Q_1, P_2 a_2 + a_1 Q_2, \ldots, P_t a_t + a_{t-1} Q_t).
\end{split}\end{equation*}
Now we notice that \begin{equation*}\begin{split}0=v((\underbrace{0, \ldots, 0}_{i-1}, E_k,0, \ldots, 0)(\underbrace{0, \ldots, 0}_i, E_k,0, \ldots, 0))=\\ (c(\underbrace{0, \ldots, 0}_{i-1}, E_k,0, \ldots, 0))(v(\underbrace{0, \ldots, 0}_i, E_k,0, \ldots, 0))+\\ (v(\underbrace{0, \ldots, 0}_{i-1}, E_k,0, \ldots, 0))(\underbrace{0, \ldots, 0}_i, E_k,0, \ldots, 0)=\\ (\underbrace{0, \ldots, 0}_i, P_{i+1}+Q_{i+1}, 0, \ldots, 0).\end{split}\end{equation*}
Thus $Q_{i+1}=-P_{i+1}$.

Note that
\begin{equation*}\begin{split}(-\zeta QP_tQ^{-1}, 0,  \ldots, 0, \zeta P_{t-1})=\zeta cv(0, \ldots, 0, E_k, 0)=vc(0, \ldots, 0, E_k, 0)=\\ v(0, \ldots, 0, E_k)=(-P_1,0, \ldots, 0, P_t),\end{split}\end{equation*}
\begin{equation*}\begin{split}(\zeta QP_tQ^{-1}, -\zeta P_1,0, \ldots, 0)=\zeta cv(0, \ldots, 0, E_k)=vc(0, \ldots, 0, E_k)=\\ v(E_k,0, \ldots, 0)=(P_1, -P_2,0, \ldots, 0),\end{split}\end{equation*}
and $P_1 = \zeta Q P_t Q^{-1}$, $P_2 = \zeta P_1$, $P_t = \zeta P_{t-1}$.

Moreover, if $t > 2$,  \begin{equation*}\begin{split}(\underbrace{0, \ldots, 0}_i, \zeta P_i, -\zeta P_{i+1},0, \ldots, 0)=\zeta cv(\underbrace{0, \ldots, 0}_{i-1}, E_k,0, \ldots, 0)=\\ vc(\underbrace{0, \ldots, 0}_{i-1}, E_k,0, \ldots, 0)=v(\underbrace{0, \ldots, 0}_i, E_k,0, \ldots, 0)=(\underbrace{0, \ldots, 0}_i, P_{i+1}, -P_{i+2},0, \ldots, 0)\end{split}\end{equation*}
for $1\leqslant i \leqslant t-2$.
Therefore, $P_{i+1}=\zeta P_i$ for all $1\leqslant i \leqslant t-1$.
Let $P:= P_1$. Then $P_i = \zeta^{i-1} P$, $\zeta^t QP Q^{-1} = P$,  (\ref{EqSSTaftSimple2}) holds, and the lemma is proved.
\end{proof}

Recall the definition of \textit{quantum binomial coefficients}: $$\binom{n}{k}_\zeta := \frac{n!_\zeta}{(n-k)!_\zeta\ k!_\zeta}$$ where $n!_\zeta := n_\zeta (n-1)_\zeta \cdot \dots \cdot 1_\zeta$ and
$n_\zeta := 1 + \zeta + \zeta^2 + \dots + \zeta^{n-1}$.

\begin{lemma}\label{LemmaTaftSimpleSemisimpleFormula}
Let $v$ be the operator defined on $M_k(F)^t$ by~(\ref{EqSSTaftSimple2}) where $QPQ^{-1}=\zeta^{-t} P$.
Then $$v^\ell (a_1, a_2, \ldots, a_t) =(b_1, b_2, \ldots, b_t)$$
where \begin{equation}\label{EqSSTaftSimple3} b_k = \zeta^{\ell(k-1)} \sum_{j=0}^\ell  (-1)^j \zeta^{-\frac{j(j-1)}{2}} \binom{\ell}{j}_{\zeta^{-1}}
P^{\ell-j} a_{k-j} P^j \end{equation}
and $a_{-j} := Q a_{t-j} Q^{-1}$, $j \geqslant 0$, $a_i \in M_k(F)$, $1\leqslant \ell \leqslant m$.
\end{lemma}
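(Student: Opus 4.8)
The plan is to argue by induction on $\ell$, $1 \leqslant \ell \leqslant m$. For $\ell = 1$, formula~(\ref{EqSSTaftSimple3}) is just~(\ref{EqSSTaftSimple2}): only the terms $j = 0$ and $j = 1$ survive, giving $\zeta^{k-1}(Pa_k - a_{k-1}P)$. Before running the induction I will record an observation that lets the first component be handled like the others. Extending the convention $a_i := Q a_{i+t} Q^{-1}$ (for $i \leqslant 0$) periodically and using $QP^rQ^{-1} = \zeta^{-tr}P^r$ (which follows from $QPQ^{-1} = \zeta^{-t}P$), the right-hand side of~(\ref{EqSSTaftSimple3}) evaluated at $k = 0$ equals $Q b_t Q^{-1}$: the conjugation supplies a factor $\zeta^{-t\ell}$ which combines with the prefactor $\zeta^{\ell(t-1)}$ of $b_t$ into $\zeta^{-\ell}$, and $\zeta^{-\ell}$ is precisely the prefactor $\zeta^{\ell(k-1)}$ at $k = 0$. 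Hence~(\ref{EqSSTaftSimple3}) is self-consistent and may be read for all integers $k \leqslant t$.

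For the inductive step, assume~(\ref{EqSSTaftSimple3}) for some $\ell$ with $\ell + 1 \leqslant m$ and write $v^{\ell+1}(a_1, \ldots, a_t) = v\bigl(v^{\ell}(a_1, \ldots, a_t)\bigr) = v(b_1, \ldots, b_t)$. By~(\ref{EqSSTaftSimple2}), the $k$th component of $v(b_1, \ldots, b_t)$ is $\zeta^{k-1}(P b_k - b_{k-1}P)$, where, by the consistency observation, $b_{k-1}$ may be taken to be the right-hand side of~(\ref{EqSSTaftSimple3}) with $k$ replaced by $k-1$ for every $k$, including $k = 1$. I substitute both instances of~(\ref{EqSSTaftSimple3}): the term $P b_k$ produces a sum over $P^{\ell+1-j}a_{k-j}P^{j}$ with $j$ running from $0$ to $\ell$, while $b_{k-1}P$ produces a sum over $P^{\ell-j}a_{k-1-j}P^{j+1}$; in the second I shift $j \mapsto j-1$ so that both sums involve $P^{\ell+1-j}a_{k-j}P^{j}$, with $j$ ranging over $0, \ldots, \ell+1$ once the vanishing boundary terms are admitted.

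Collecting the powers of $\zeta$, the common prefactor becomes $\zeta^{(\ell+1)(k-1)}$ (using $k-1+\ell(k-1) = (\ell+1)(k-1)$ for the first sum and $k-1+\ell(k-2) = (\ell+1)(k-1)-\ell$ for the second), and matching the coefficient of $P^{\ell+1-j}a_{k-j}P^{j}$ reduces the claim to the scalar identity
\[
\binom{\ell+1}{j}_{\zeta^{-1}} = \binom{\ell}{j}_{\zeta^{-1}} + \zeta^{-(\ell+1-j)}\binom{\ell}{j-1}_{\zeta^{-1}}, \qquad 0 \leqslant j \leqslant \ell+1,
\]
with the usual convention that a quantum binomial coefficient vanishes once its lower index is negative or exceeds the upper one; here one uses $\tfrac{j(j-1)}{2} - \tfrac{(j-1)(j-2)}{2} = j-1$ to see that the index shift contributes the factor $\zeta^{-\ell+j-1} = \zeta^{-(\ell+1-j)}$, and the signs combine because $-(-1)^{j-1} = (-1)^{j}$. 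The displayed identity is exactly the $q$-Pascal rule for quantum binomial coefficients with $q = \zeta^{-1}$, which is immediate from the definition of $\binom{n}{k}_{\zeta^{-1}}$ via $n!_{\zeta^{-1}}$. This proves~(\ref{EqSSTaftSimple3}) for $\ell + 1$ and completes the induction.

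The routine but error-prone part --- and essentially the only obstacle --- is the bookkeeping of the powers of $\zeta$: making the prefactor collapse to $\zeta^{(\ell+1)(k-1)}$, carrying the signs $(-1)^j$ and the exponents $\zeta^{-j(j-1)/2}$ correctly through the index shift, and recognizing the outcome as the $q$-Pascal identity in the variable $\zeta^{-1}$ (not $\zeta$, and with the exponent $-(\ell+1-j)$, not its negative). One should also keep in mind that the induction only produces the formula for $\ell + 1 \leqslant m$, which is why~(\ref{EqSSTaftSimple3}) is asserted only for $1 \leqslant \ell \leqslant m$.
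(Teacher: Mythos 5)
Your proof is correct and follows essentially the same route as the paper: induction on $\ell$, substitution of the inductive hypothesis into $\zeta^{k-1}(Pb_k-b_{k-1}P)$, the index shift $j\mapsto j-1$, and the $q$-Pascal rule for $\binom{\cdot}{\cdot}_{\zeta^{-1}}$. Your explicit check that the formula at $k=0$ reproduces $Qb_tQ^{-1}$ (via $QP^rQ^{-1}=\zeta^{-tr}P^r$) is a welcome piece of care that the paper leaves implicit, but it does not change the argument.
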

\begin{proof}
We prove the assertion by induction on $\ell$. The base $\ell=1$ is evident.
Suppose~(\ref{EqSSTaftSimple3}) holds for $\ell$. Then
$$v^{\ell+1} (a_1, a_2, \ldots, a_t) =(\tilde b_1, \tilde b_2, \ldots, \tilde b_t)$$
where $\tilde b_k = \zeta^{k-1}(P b_k - b_{k-1} P)$, $1\leqslant k \leqslant t$, and $b_0 := Q b_t Q^{-1}$.
Then
\begin{equation*}\begin{split}
\tilde b_k =  \zeta^{k-1}\left( \zeta^{\ell(k-1)} \sum_{j=0}^\ell (-1)^j \zeta^{-\frac{j(j-1)}{2}} \binom{\ell}{j}_{\zeta^{-1}}
P^{\ell-j+1} a_{k-j} P^j  -\right. \\ \left. \zeta^{\ell(k-2)} \sum_{j=0}^\ell (-1)^j \zeta^{-\frac{j(j-1)}{2}} \binom{\ell}{j}_{\zeta^{-1}}
P^{\ell-j} a_{k-j-1} P^{j+1} \right)= \\ \zeta^{k-1}\left( \zeta^{\ell(k-1)} \sum_{j=0}^\ell (-1)^j \zeta^{-\frac{j(j-1)}{2}} \binom{\ell}{j}_{\zeta^{-1}}
P^{\ell-j+1} a_{k-j} P^j  -\right. \\ \left. \zeta^{\ell(k-2)} \sum_{j=1}^{\ell+1} (-1)^{j-1} \zeta^{-\frac{(j-2)(j-1)}{2}} \binom{\ell}{j-1}_{\zeta^{-1}}
P^{\ell-j+1} a_{k-j} P^j \right)= \\ \zeta^{(\ell+1)(k-1)}\left( \sum_{j=0}^\ell (-1)^j \zeta^{-\frac{j(j-1)}{2}} \binom{\ell}{j}_{\zeta^{-1}}
P^{\ell-j+1} a_{k-j} P^j  +\right. \\ \left.  \sum_{j=1}^{\ell+1} (-1)^j \zeta^{-\frac{j(j-1)}{2}} \zeta^{j-\ell-1} \binom{\ell}{j-1}_{\zeta^{-1}}
P^{\ell-j+1} a_{k-j} P^j \right)=\\
\zeta^{(\ell+1)(k-1)} \sum_{j=0}^{\ell+1} (-1)^j \zeta^{-\frac{j(j-1)}{2}} \binom{\ell+1}{j}_{\zeta^{-1}}
P^{\ell-j+1} a_{k-j} P^j.\end{split}\end{equation*}  Therefore, (\ref{EqSSTaftSimple3}) holds for every $1\leqslant \ell \leqslant m$.
\end{proof}
\begin{proof}[Proof of Theorem~\ref{TheoremTaftSimpleSemisimple}]
Recall that $v^m=0$ and $\binom{m}{j}_{\zeta^{-1}}=0$ for $1\leqslant j \leqslant m-1$. Thus Lemmas~\ref{LemmaTaftSimpleSemisimpleFirst}
and \ref{LemmaTaftSimpleSemisimpleFormula} imply
\begin{equation*}\begin{split}v^m(a_1, \ldots, a_t)=(P^m a_1 - a_{1-m}P^m, P^m a_2 - a_{2-m}P^m, \ldots, P^m a_t - a_{t-m}P^m)=\\
([P^m, a_1], [P^m, a_2], \ldots, [P^m, a_t]) = 0\end{split}\end{equation*}
for all $a_i \in M_k(F)$ since $Q^{\frac{m}{t}}=E_k$. Therefore, $P^m = \alpha E_k$ for some $\alpha\in F$,
and we get the theorem.
\end{proof}

\begin{remark}
Conversely, for every $k, t \in\mathbb N$, $t \mid m$, and matrices  $P \in M_k(F)$ and $Q \in \GL_k(F)$
such that $Q^{\frac{m}{t}} = E_k$, $Q P Q^{-1}=\zeta^{-t} P$, $P^m = \alpha E_k$ for some $\alpha \in F$,
we can define the structure of an $H_{m^2}(\zeta)$-simple algebra on 
$A \cong \underbrace{M_k(F) \oplus M_k(F) \oplus \dots \oplus M_k(F)}_t$ (direct sum of ideals) by~(\ref{EqSSTaftSimple1}) and~(\ref{EqSSTaftSimple2}), and this algebra $A$ is even $\mathbb Z_m$-simple.
\end{remark}

\begin{theorem}\label{TheoremTaftSimpleSSIso}
Let $A \cong \underbrace{M_k(F) \oplus M_k(F) \oplus \dots \oplus M_k(F)}_t$ (direct sum of ideals) be a semisimple $H_{m^2}(\zeta)$-simple algebra over a field $F$
defined by matrices $P_1 \in M_k(F)$ and $Q_1 \in \GL_k(F)$
 by~(\ref{EqSSTaftSimple1}) and~(\ref{EqSSTaftSimple2}),
and let $A_2$ be another such algebra defined by matrices $P_2 \in M_k(F)$ and $Q_2 \in \GL_k(F)$.
Then $A_1 \cong A_2$ as algebras and $H_{m^2}(\zeta)$-modules
if and only if $P_2 =\zeta^r\, T P_1 T^{-1}$ and $Q_2 = \beta T Q_1 T^{-1}$ for some $r\in\mathbb Z$, $\beta \in F$, and $T \in \GL_k(F)$.
\end{theorem}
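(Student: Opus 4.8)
Both implications come down to the combinatorics of the cyclic index set $\{1,\dots,t\}$ together with the fact (used already in Lemma~\ref{LemmaTaftSimpleMatrix}) that every automorphism of $M_k(F)$ is inner; since $c$ and $v$ generate $H_{m^2}(\zeta)$, a bijection $\varphi\colon A_1\to A_2$ is an isomorphism of $H_{m^2}(\zeta)$-module algebras exactly when it is an algebra isomorphism commuting with $c$ and with $v$. For the ``only if'' part, let $\varphi$ be such a map. Being an algebra isomorphism, $\varphi$ permutes the $t$ minimal two-sided ideals of $\bigoplus_t M_k(F)$, and on each of them it is a conjugation: the summand in position $i$ is carried into the summand in some position, by $a\mapsto U_i a U_i^{-1}$ with $U_i\in\GL_k(F)$. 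Because $c$ induces the $t$-cycle $i\mapsto i+1$ on the positions of both $A_1$ and $A_2$, the relation $c\varphi=\varphi c$ forces the permutation underlying $\varphi$ to commute with this $t$-cycle, hence to be a shift $i\mapsto i+s$ for a single $s$ modulo $t$. Writing $c\varphi=\varphi c$ out componentwise and using that a matrix centralizing $M_k(F)$ is scalar, one obtains relations $U_i=\gamma_i U_{i-1}$ at the ``generic'' positions together with one exceptional relation at the slot carrying the twist by $Q_i$; chaining these once around the cycle collapses all the $U_i$ to a single matrix $T$ (up to scalars and one power of $Q_1$) and at the same time produces $Q_2=\beta\,TQ_1T^{-1}$ for some $\beta\in F^\times$.

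Feeding this information into $v\varphi=\varphi v$ and reading off the coefficient of a generic free matrix variable $a_i$, the factors $\zeta^{i-1}$ appearing on the $A_1$-side of~(\ref{EqSSTaftSimple2}) and the factors $\zeta^{i-s-1}$ on the $A_2$-side no longer agree; the scalars hidden in the $U_i$ cancel (the relevant terms are conjugation-like), and comparing the two sides forces $P_2=\zeta^{-s}\,TP_1T^{-1}$. Setting $r:=-s\in\mathbb Z$ gives the stated relations, and the one wraparound component contributes no extra constraint, being automatically consistent because $\zeta^{t}Q_1P_1Q_1^{-1}=P_1$. For the ``if'' part I would simply exhibit the isomorphism: using $\zeta^{t}Q_1P_1Q_1^{-1}=P_1$ one may first normalize $r$ to lie in $\{0,1,\dots,t-1\}$ (shifting $r$ by $t$ costs one conjugation of $P_1$ by $Q_1$, which is absorbed into $T$ and $\beta$), and then define $\varphi\colon A_1\to A_2$ by conjugating every matrix component by $T$ and cyclically shifting the components by $-r$, inserting one correcting factor $Q_1^{\pm1}$ on the components that wrap around. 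Built from conjugations and a permutation of the simple summands, $\varphi$ is a bijective algebra homomorphism, so it only remains to substitute it into~(\ref{EqSSTaftSimple1}) and~(\ref{EqSSTaftSimple2}) and verify $c\varphi=\varphi c$ and $v\varphi=\varphi v$: the shift by $-r$ interacts with the coefficients $\zeta^{i-1}$ in~(\ref{EqSSTaftSimple2}) to reproduce precisely the factor $\zeta^{r}$ in $P_2=\zeta^{r}TP_1T^{-1}$, the wraparound correction accounts for the $Q_1$'s, and $Q_2=\beta TQ_1T^{-1}$ is exactly what is needed for $c\varphi=\varphi c$ on the wraparound slot.

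The genuine difficulty is entirely organizational. There is no new idea beyond Skolem--Noether and the structure formulas~(\ref{EqSSTaftSimple1})--(\ref{EqSSTaftSimple2}); the work is in carrying the cyclic indices modulo $t$, the twists by $Q_i$ at the single wraparound slot, and the powers of $\zeta$ from~(\ref{EqSSTaftSimple2}) through the componentwise comparisons without sign or indexing errors, and in separating off the degenerate cases ($t=1$, or $s\equiv0\pmod t$, or the exceptional slot in the $c$-computation coinciding with the wraparound slot) so that the ``chaining around the cycle'' argument is stated correctly in each of them.
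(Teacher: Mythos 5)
Your proposal follows essentially the same route as the paper's proof: Skolem--Noether on each simple summand, the observation that the permutation of summands must commute with the $t$-cycle induced by $c$ and hence be a shift, chaining the proportionality relations among the conjugating matrices around the cycle (with the single $Q$-twisted wraparound slot) to collapse them to one matrix $T$ and extract $Q_2=\beta TQ_1T^{-1}$, then reading off $P_2=\zeta^r TP_1T^{-1}$ from $v\varphi=\varphi v$ and verifying the converse by exhibiting the map explicitly. The sign/indexing conventions differ ($r=-s$) but are consistent, and the remaining work you identify is exactly the bookkeeping the paper carries out.
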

\begin{proof}
Note that in each of $A_1$ and $A_2$ there exist exactly $t$ simple ideals 
isomorphic to $M_k(F)$. Moreover, each isomorphism of $M_k(F)$ is inner.
Therefore, if $\varphi \colon A_1 \to A_2$ is an isomorphism
of algebras and $H_{m^2}(\zeta)$-modules, then there exist 
matrices $T_i \in \GL_k(F)$ and a number $0 \leqslant r < t$ such that
\begin{equation*}\begin{split}\varphi(a_1, \ldots, a_m)=(T_{r+1} a_{r+1} T_{r+1}^{-1}, T_{r+2} a_{r+2} T_{r+2}^{-1},
\ldots, T_t a_t T_t^{-1},\\ T_1 a_1 T_1^{-1}, T_2 a_2 T_2^{-1}, \ldots, T_r a_r T_r^{-1})\end{split}\end{equation*}
for all $a_i \in M_k(F)$. (Here we use the fact that $\varphi$ must commute with $c$.)
Using $c\varphi = \varphi c$ once again, we get
\begin{equation*}\begin{split}c\varphi(a_1, \ldots, a_m)=(Q_2 T_r a_r T_r^{-1} Q_2^{-1}, T_{r+1} a_{r+1} T_{r+1}^{-1}, T_{r+2} a_{r+2} T_{r+2}^{-1},
\ldots, T_t a_t T_t^{-1},\\ T_1 a_1 T_1^{-1}, T_2 a_2 T_2^{-1}, \ldots, T_{r-1} a_{r-1} T_{r-1}^{-1})=\\ \varphi c(a_1, \ldots, a_m)=(T_{r+1} a_r T_{r+1}^{-1}, T_{r+2} a_{r+1} T_{r+2}^{-1},
\ldots, T_t a_{t-1} T_t^{-1},\\ T_1 Q_1 a_t Q_1^{-1} T_1^{-1}, T_2 a_1 T_2^{-1}, \ldots, T_r a_{r-1} T_r^{-1})\end{split}\end{equation*}
for all $a_i \in M_k(F)$.
Therefore, $T_i$ is proportional to $T_{i+1}$ for $1 \leqslant i \leqslant r-1$, $r+1\leqslant i \leqslant t-1$. In addition, $Q_2 T_r$ is proportional to $T_{r+1}$, and $T_t$ is proportional to $T_1 Q_1$.
Multiplying $T_i$ by scalars, we may assume that $T_1 = \ldots = T_r$, $T_{r+1}=\ldots=T_t = T_1 Q_1$.  Let $T := T_{r+1}$.
Then \begin{equation*}\begin{split}\varphi(a_1, \ldots, a_m)=(T a_{r+1} T^{-1}, T a_{r+2} T^{-1},
\ldots, T a_t T^{-1},\\  TQ_1^{-1} a_1 Q_1 T^{-1},  T Q_1^{-1} a_2 Q_1 T^{-1}, \ldots, TQ_1^{-1} a_r Q_1T^{-1} )\end{split}\end{equation*}
and $Q_2 = \beta T Q_1 T^{-1}$ for some $\beta \in F$.

Using $v\varphi = \varphi v$, we get
\begin{equation*}\begin{split}(P_2 T a_{r+1} T^{-1} - T a_r T^{-1} P_2, 
\zeta (P_2 T a_{r+2} T^{-1} - T a_{r+1} T^{-1} P_2), 
\ldots,\\ \zeta^{t-r-1}(P_2 T a_t T^{-1} - T a_{t-1} T^{-1}  P_2),\\ 
\zeta^{t-r}(P_2 T  Q_1^{-1}  a_1  Q_1 T^{-1}- T a_t T^{-1} P_2), \zeta^{t-r+1}(P_2 T  Q_1^{-1}  a_2  Q_1  T^{-1} - T  Q_1^{-1}  a_1  Q_1 T^{-1} P_2), \ldots,\\ \zeta^{t-1}(P_2 T  Q_1^{-1} a_r  Q_1  T^{-1}- T  Q_1^{-1} a_{r-1}  Q_1 T^{-1}P_2 ))=\\
(P_2 T a_{r+1} T^{-1} - Q_2 T Q_1^{-1} a_r Q_1 T^{-1} Q_2^{-1} P_2, 
\zeta (P_2 T a_{r+2} T^{-1} - T a_{r+1} T^{-1} P_2), 
\ldots,\\ \zeta^{t-r-1}(P_2 T a_t T^{-1} - T a_{t-1} T^{-1}  P_2),\\ 
\zeta^{t-r}(P_2 T  Q_1^{-1}  a_1  Q_1 T^{-1}- T a_t T^{-1} P_2), \zeta^{t-r+1}(P_2 T  Q_1^{-1}  a_2  Q_1  T^{-1} - T  Q_1^{-1}  a_1  Q_1 T^{-1} P_2), \ldots,\\ \zeta^{t-1}(P_2 T  Q_1^{-1} a_r  Q_1  T^{-1}- T  Q_1^{-1} a_{r-1}  Q_1 T^{-1}P_2 ))=\\ v(T a_{r+1} T^{-1}, T a_{r+2} T^{-1},
\ldots, T a_t T^{-1},\\  TQ_1^{-1} a_1 Q_1 T^{-1},  T Q_1^{-1} a_2 Q_1 T^{-1}, \ldots, TQ_1^{-1} a_r Q_1T^{-1})=v\varphi(a_1, \ldots, a_m)=\\ \varphi v(a_1, \ldots, a_m)
=\varphi (P_1 a_1 - (Q_1 a_t Q_1^{-1}) P_1, \zeta (P_1 a_2 - a_1 P_1),  \ldots,  \zeta^{t-1}(P_1 a_t - a_{t-1} P_1))=\\
(\zeta^r T (P_1 a_{r+1} - a_r P_1) T^{-1},  \zeta^{r+1} T (P_1 a_{r+2} - a_{r+1} P_1) T^{-1}, \ldots,  \zeta^{t-1} T (P_1 a_t - a_{t-1} P_1) T^{-1}, \\ TQ_1^{-1}(P_1 a_1 - (Q_1 a_t Q_1^{-1}) P_1)Q_1 T^{-1},  \zeta TQ_1^{-1}(P_1 a_2 - a_1 P_1)Q_1 T^{-1}, \ldots,\\ \zeta^{r-1}T Q_1^{-1}(P_1 a_r - a_{r-1} P_1)Q_1 T^{-1})=\\
(\zeta^r (T P_1 a_{r+1} T^{-1} - T a_r P_1 T^{-1}),  \zeta^{r+1} (T P_1 a_{r+2}  T^{-1} - T a_{r+1} P_1 T^{-1}), \ldots,\\  \zeta^{t-1} (T P_1 a_t  T^{-1} - T a_{t-1} P_1 T^{-1}), \\ TQ_1^{-1} P_1 a_1 Q_1 T^{-1} - T a_t (Q_1^{-1} P_1 Q_1) T^{-1},  \zeta (TQ_1^{-1} P_1 a_2 Q_1 T^{-1} - T Q_1^{-1} a_1 P_1 Q_1 T^{-1}), \ldots,\\ \zeta^{r-1}(T Q_1^{-1}P_1 a_r Q_1 T^{-1} - T Q_1^{-1} a_{r-1} P_1 Q_1 T^{-1}))\end{split}\end{equation*}
for all $a_i \in M_k(F)$. Hence $$P_2 = \zeta^r\, T P_1 T^{-1} = \zeta^{r-t}\, T Q_1^{-1} P_1 Q_1 T^{-1} $$ if $r > 0$,
and $P_2 = T P_1 T^{-1}$ if $r=0$. Taking $Q_1 P_1 Q_1^{-1} = \zeta^{-t} P_1$ into account, we reduce both conditions to
$P_2 =\zeta^r\, T P_1 T^{-1}$.

The converse assertion is proved explicitly. If $P_2 =\zeta^r\, T P_1 T^{-1}$ for some $r \in \mathbb Z$,  we can always make $0 \leqslant r < t$ conjugating $P_1$ by $Q_1$.
\end{proof}

\begin{remark}
Therefore every automorphism of a semisimple $H_{m^2}(\zeta)$-simple algebra $A$
that corresponds to a number $t \in \mathbb N$, and matrices $P \in M_k(F)$, $Q \in \GL_k(F)$, can be identified with a pair $(\bar T, r)$, $0\leqslant r < t$ where $T \in \GL_k(F)$, $QTQ^{-1}T^{-1} = \beta E_k$ for some $\beta \in F$, $P =\zeta^r\, T P T^{-1}$.
(Here by $\bar T$ we denote the class of a matrix $T \in \GL_k(F)$ in $\PGL_k(F)$.)
If we transfer the multiplication from the automorphism group to the set of such pairs,
we get $$(\overline W,s)(\overline T,r)=\left\lbrace\begin{array}{llcc} (\overline{WT}, & r+s) & \text{ if }& r+s < t, \\
(\overline{WTQ^{-1}}, & r+s-t) & \text{ if }& r+s \geqslant t. \end{array}\right.$$
Therefore, the automorphism group of $A$ is an extension of a subgroup of $\mathbb Z_m$ by a subgroup of $\PGL_k(F)$.
\end{remark}

\begin{remark}
The case $m=2$ is worked out in detail in~\cite{ASGordienko11}.
Below we list several examples that are consequences of Theorems~\ref{TheoremTaftSimpleSemisimple} and~\ref{TheoremTaftSimpleSSIso}.
\end{remark}

 \begin{example}
 In the case of $m=2$ and $A\cong M_2(F)$ we have the following variants:
 \begin{enumerate}
 \item $A = A^{(0)}= M_2(F)$, $A^{(1)}= 0$, $ca=a$, $va=0$ for all $a\in A$;
 \item $A = A^{(0)} \oplus A^{(1)}$ where $$A^{(0)}=\left\lbrace\left(\begin{array}{cc}
\alpha & 0 \\
0      & \beta 
 \end{array}\right) \mathbin{\biggl|} \alpha,\beta \in F\right\rbrace$$
 and $$A^{(1)}=\left\lbrace\left(\begin{array}{cc}
0 & \alpha \\
\beta      & 0 
 \end{array}\right) \mathbin{\biggl|} \alpha,\beta \in F\right\rbrace,$$
 $ca=(-1)^{i}a$, $va=0$ for $a\in A^{(i)}$;
  \item $A = A^{(0)} \oplus A^{(1)}$ where $$A^{(0)}=\left\lbrace\left(\begin{array}{cc}
\alpha & 0 \\
0      & \beta 
 \end{array}\right) \mathbin{\biggl|} \alpha,\beta \in F\right\rbrace$$
 and $$A^{(1)}=\left\lbrace\left(\begin{array}{cc}
0 & \alpha \\
\beta      & 0 
 \end{array}\right) \mathbin{\biggl|} \alpha,\beta \in F\right\rbrace,$$
 $ca=(-1)^{i}a$, $va = Pa-(ca)P$ for $a\in A^{(i)}$ where $P=\left(\begin{array}{cc}
0 & 1 \\
\gamma      & 0 
 \end{array}\right)$ and $\gamma \in F$ is a fixed number.
  \end{enumerate}
 \end{example}

\begin{example}
Every semisimple $H_4(-1)$-simple algebra $A$ over an algebraically closed field $F$,
$\ch F \ne 2$, that is not simple as an ordinary algebra,
is isomorphic to
$M_k(F) \oplus M_k(F)$ (direct sum of ideals) for some $k \geqslant 1$
where
$$ c\, (a, b) = (b,a),\qquad v\,(a,b)=(Pa-bP,aP-Pb)$$
for all $a,b \in M_k(F)$
and
\begin{enumerate}
\item
either $P=(\underbrace{\alpha,\alpha,\ldots, \alpha}_{k_1}, 
\underbrace{-\alpha,-\alpha,\ldots, -\alpha}_{k_2})$ for some $\alpha \in F$
and $k_1 \geqslant k_2$, $k_1+k_2=k$,
\item or $P$ is a block diagonal matrix with several blocks $\left(\begin{smallmatrix} 
 0 & 1 \\
 0 & 0\\
 \end{smallmatrix}\right)$ on the main diagonal (the rest cells are filled with zeros)
 \end{enumerate}
and these algebras are not isomorphic for different $P$.
\end{example}

\section{Non-semisimple algebras}\label{SectionTaftSimpleNonSemisimple}

First we construct an example of an $H_{m^2}(\zeta)$-simple algebra and then we prove that every non-semisimple $H_{m^2}(\zeta)$-simple algebra is isomorphic to one of the algebras below.

\begin{theorem}\label{TheoremTaftSimpleNonSemiSimplePresent} Let $B$ be a simple $\mathbb Z_m$-graded algebra over a field $F$. Suppose $F$ contains some primitive $m$th root of unity $\zeta$. Define $\mathbb Z_m$-graded vector spaces $W_i$, $1\leqslant i \leqslant m-1$, $W_0 := B$, with linear isomorphisms $\varphi \colon W_{i-1} \to W_i$ (we denote the isomorphisms by the same letter), $1 \leqslant i \leqslant m-1$, such that $\varphi(W_{i-1}^{(\ell)})=W_i^{(\ell+1)}$. Let $\varphi(W_{m-1})=0$.
Consider $H_{m^2}(\zeta)$-module $A=\bigoplus_{i=0}^{m-1} W_i$ (direct sum of subspaces) where $v\varphi(a)=a$ for all $a \in W_i$,
$0\leqslant i \leqslant m-2$, $vB=0$,
and $c a^{(i)}=\zeta^i a^{(i)}$,  $a^{(i)} \in A^{(i)}$, $A^{(i)} := \bigoplus_{i=0}^{m-1} W_i^{(i)}$ (direct sum of subspaces).
  Define the multiplication on $A$ by $$\varphi^k(a)\varphi^\ell(b)=\binom{k+\ell}{k}_\zeta\ \varphi^{k+\ell}((c^\ell a)b) \text{ for all }a, b\in B \text{ and } 0 \leqslant k,\ell < m.$$ Then $A$ is an  $H_{m^2}(\zeta)$-simple algebra.
\end{theorem}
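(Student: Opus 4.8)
The plan is to check three things in order: that the stated multiplication makes $A$ an associative algebra, that the operators $c$ and $v$ satisfy the defining relations of $H_{m^2}(\zeta)$ together with the module-algebra axiom, and that $A$ has no proper nonzero $H$-invariant ideal. Throughout one uses that each $\varphi^k\colon B\to W_k$, $0\leqslant k\leqslant m-1$, is a linear isomorphism and that $\varphi^n=0$ for $n\geqslant m$ (forced by $\varphi(W_{m-1})=0$), so every element of $A$ is a unique sum $\sum_{k=0}^{m-1}\varphi^k(a_k)$, $a_k\in B$, and the product is well defined by bilinearity. For associativity I would expand both $(\varphi^k(a)\varphi^\ell(b))\varphi^n(d)$ and $\varphi^k(a)(\varphi^\ell(b)\varphi^n(d))$ with the defining formula, using that $c$ acts on $B$ as an algebra automorphism so that $c^n((c^\ell a)b)=(c^{n+\ell}a)(c^n b)$; both become a scalar times $\varphi^{k+\ell+n}((c^{n+\ell}a)(c^n b)d)$, and equality of the two scalars is exactly the quantum Vandermonde identity $\binom{k+\ell}{k}_{\zeta}\binom{k+\ell+n}{k+\ell}_{\zeta}=\binom{\ell+n}{\ell}_{\zeta}\binom{k+\ell+n}{k}_{\zeta}$, which one reads off from the factorial expression for quantum binomials (the denominators never vanish since the arguments are $<m$, and when $k+\ell+n\geqslant m$ both sides vanish alongside $\varphi^{k+\ell+n}$).

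Next I would verify the $H_{m^2}(\zeta)$-structure. As operators on $A$, $c$ and $v$ satisfy $c^m=\id_A$, $v^m=0$ and $vc=\zeta cv$: that $A=\bigoplus_i A^{(i)}$ is a $\mathbb Z_m$-grading (so $c$ acts by an automorphism of order dividing $m$) follows from the degree shift $\varphi(W_{i-1}^{(\ell)})=W_i^{(\ell+1)}$ together with $(c^\ell a)b=\zeta^{\ell p}ab$ for $a\in B^{(p)}$; $v^m=0$ because $v^{k+1}$ annihilates $\varphi^k(B)$; and $vc=\zeta cv$ because $v$ lowers the $W$-index, hence the $\mathbb Z_m$-degree, by one. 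Thus there is a well-defined algebra map $H_{m^2}(\zeta)\to\End_F(A)$, and since the module-algebra identity is multiplicative in $h\in H$ it suffices to check it on $c$ and $v$. For $c$ it is the grading-compatibility of the product just noted. For $v(xy)=(cx)(vy)+(vx)y$ it suffices, by linearity, to take $x=\varphi^k(a)$, $y=\varphi^\ell(b)$ with $a\in B^{(p)}$; after cancelling the common factor $\zeta^{\ell p}\varphi^{k+\ell-1}(ab)$ this reduces to the quantum Pascal relation $\binom{k+\ell}{k}_{\zeta}=\zeta^{k}\binom{k+\ell-1}{k}_{\zeta}+\binom{k+\ell-1}{k-1}_{\zeta}$, which also covers the degenerate cases $k=0$ or $\ell=0$ (one term on the right drops out).

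For $H$-simplicity, note first that the product of $A$ restricts on $W_0=B$ to the product of $B$, and $B^2$ is a nonzero graded two-sided ideal of the graded-simple $B$, so $B^2=B\neq0$ and $A^2\neq0$. Now let $I\neq0$ be an $H_{m^2}(\zeta)$-invariant ideal. As $I$ is $c$-invariant and $c$ acts with distinct eigenvalues $\zeta^0,\dots,\zeta^{m-1}$ on the homogeneous components, $I$ is graded. Choose $0\neq x=\sum_k\varphi^k(a_k)\in I$ and set $k_0=\max\{k:a_k\neq0\}$; then $v^{k_0}x=a_{k_0}$ is a nonzero element of $B\cap I$, so $B\cap I=\bigoplus_i(B^{(i)}\cap I)$ is a nonzero graded ideal of $B$, whence $B\cap I=B$ and $B\subseteq I$. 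Finally, for each $k$ one has $\varphi^0(B)\,\varphi^k(B)=\varphi^k(B^2)=W_k$ (using that $c^k$ is a linear bijection of $B$), and the left side lies in $I\cdot A\subseteq I$; hence $W_k\subseteq I$ for every $k$, so $I=A$. Therefore $A$ is $H_{m^2}(\zeta)$-simple.

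The computations are routine; the one place demanding care is the bookkeeping of the powers of $\zeta$ produced by the $c^\ell$-twists in the multiplication and by the grading shifts of $\varphi$, and matching them, in each interior and boundary case, with the correct identity — the quantum Vandermonde identity for associativity and the quantum Pascal identity for the skew-derivation axiom. Once these are pinned down, $H$-simplicity follows quickly, since $v$ lets one descend to the graded-simple $B$ while $B^2=B$ lets one climb back up through all the $W_k$.
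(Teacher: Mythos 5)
Your proposal is correct and follows essentially the same route as the paper: the paper disposes of the module--algebra axioms with ``we check explicitly'' (which you carry out via the $q$-Pascal and $q$-trinomial-revision identities, with the right boundary cases), and it proves $H$-simplicity by the same descent $0\neq v^{t}I\subseteq I\cap\ker v=I\cap B$ followed by graded-simplicity of $B$. The only cosmetic difference is the final step: the paper concludes from $1_A\in B\subseteq I$, whereas you use $\varphi^0(B)\varphi^k(B)=W_k$ to recover all of $A$, which avoids invoking the unity of $B$.
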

\begin{proof}
We check explicitly that the formulas indeed define on $A$ a structure of an $H_{m^2}(\zeta)$-module
algebra. 

Suppose that $I$ is an $H_{m^2}(\zeta)$-invariant ideal of $A$. Then $v^m I = 0$.
Let $t \in \mathbb Z_+$ such that $v^t I \ne 0$, $v^{t+1} I = 0$. Then $0 \ne v^t I \subseteq I \cap \ker v$. However, $\ker v = B$ is a simple graded algebra. Thus $\ker v \subseteq I$. Since $1_A \in I$,
we get $I = A$. Therefore, $A$ is an  $H_{m^2}(\zeta)$-simple algebra.
\end{proof}

Now we prove that we indeed have obtained all non-semisimple $H_{m^2}(\zeta)$-simple algebras.

\begin{theorem}\label{TheoremTaftSimpleNonSemiSimpleClassify}
Suppose $A$ is a finite dimensional $H_{m^2}(\zeta)$-simple algebra over a perfect field $F$ and $J:=J(A)\ne 0$.
Then $A$ is isomorphic to an algebra from Theorem~\ref{TheoremTaftSimpleNonSemiSimplePresent}.
\end{theorem}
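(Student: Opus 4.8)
The plan is to reconstruct, from the internal structure of $A$, the simple graded algebra $B$, the nilpotent skew-derivation $v$, and the isomorphisms $\varphi$. The starting point is the filtration of $A$ by the kernels of powers of $v$. Since $v^m = 0$ on $A$ (because $v^m = 0$ in $H_{m^2}(\zeta)$), we set $A_i := \ker v^{i+1} \cap \cdots$; more precisely, consider $V_i := v^i A$, which give a decreasing chain $A = V_0 \supseteq V_1 \supseteq \cdots \supseteq V_{m-1} \supseteq V_m = 0$, and dually the increasing chain $\ker v \subseteq \ker v^2 \subseteq \cdots$. The first key claim is that $\ker v$ is exactly the semisimple part: I would show that $\ker v$ contains a copy of a simple $\mathbb Z_m$-graded algebra $B$ and that, in fact, $J(A)$ is built out of the strictly positive pieces $V_1, \ldots, V_{m-1}$ of the $v$-filtration. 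The argument that $\ker v$ is $H$-stable and contains an $H$-invariant ideal unless it is a graded-simple algebra mimics the proof of Theorem~\ref{TheoremTaftSimpleNonSemiSimplePresent}: if $I$ is any nonzero $H$-invariant ideal then taking the top nonvanishing power $v^t I$ lands in $I \cap \ker v$, so $H$-simplicity forces $\ker v$ to have no proper nonzero graded ideal stable under the relevant operators, hence $\ker v \cong B$ is graded-simple. Because $F$ is perfect, $B$ — being semisimple (it has no nilpotents coming from the radical, as I will verify) — is a direct sum of matrix algebras over division algebras, and the Wedderburn–Malcev theorem gives a semisimple subalgebra of $A$ mapping isomorphically onto $A/J$; I would arrange this lift to coincide with $\ker v = B$.

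Next I would analyze how $v$ moves things. The relation $vc = \zeta cv$ shows $v(A^{(\ell)}) \subseteq A^{(\ell+1)}$, so $v$ shifts the grading by $1$. The central structural fact to establish is that $v$ restricts to a linear isomorphism $V_i \to V_{i+1}$ for $0 \le i \le m-2$ onto its image inside the next layer, and that as a $B$-bimodule each $V_i$ is free of rank one, generated by $v^i(1_A)$ — equivalently, that $A$ is spanned by $\{\varphi^k(b) : b \in B,\ 0 \le k < m\}$ where $\varphi := v$ suitably normalized and $\varphi^k(1_A) =: $ the generator of $V_k$. To get this, note $1_A \in \ker v$ (since $v(1_A) = v(1_A 1_A) = (c1_A)(v1_A) + (v1_A)1_A = 2 v(1_A)$ forces $v(1_A)=0$), so $1_A \in B$, and then $V_k = v^k A = v^k(A \cdot 1_A)$; using the skew-Leibniz rule repeatedly one expresses $v^k(b_0 b_1 \cdots)$ as a $\zeta$-binomial combination of products $v^{j}(b)\, v^{k-j}(b')$, which both pins down the multiplication formula $\varphi^k(a)\varphi^\ell(b) = \binom{k+\ell}{k}_\zeta \varphi^{k+\ell}((c^\ell a) b)$ and shows the layers are as claimed. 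Here I must check $V_{m-1} \ne 0$ and $\dim V_k = \dim B$ for all $k < m$: if some $V_k = 0$ for $k < m$ then $v^k A = 0$, and I would argue that $\sum_{j \ge k} (\text{layers})$ is then an $H$-invariant ideal contradicting $H$-simplicity unless $k = m$, exactly as in the proof of Theorem~\ref{TheoremTaftSimpleNonSemiSimplePresent}. The grading on $W_i := V_i$ is inherited, and $\varphi(W_{i-1}^{(\ell)}) = W_i^{(\ell+1)}$ is just the grading shift of $v$.

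The remaining work is bookkeeping: rescale $v$ on each layer so that the composite $\varphi\colon W_{i-1}\to W_i$ satisfies $v\varphi(a) = a$ (this is possible because $v$ is already a bijection between consecutive layers — one only adjusts by the scalar identifying $W_i$ with $v W_{i-1}$), verify that the induced $\mathbb Z_m$-action on $A = \bigoplus_i W_i$ is the diagonal one $c\,a^{(i)} = \zeta^i a^{(i)}$ coming from the grading, and confirm that under these identifications the multiplication on $A$ is exactly the one in Theorem~\ref{TheoremTaftSimpleNonSemiSimplePresent} — the $\zeta$-binomial coefficients appear precisely because of how $v^{k+\ell}$ distributes over a product via the $q$-Leibniz rule (the computation in Lemma~\ref{LemmaTaftSimpleSemisimpleFormula} is the prototype). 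Finally one records that $J(A) = \bigoplus_{i=1}^{m-1} W_i$ and $A/J \cong B$, so $A$ is the algebra built from the graded-simple $B$ as prescribed. The main obstacle I anticipate is the step showing $v$ is a bijection between consecutive filtration layers and that each layer is a rank-one free $B$-bimodule generated by $v^i(1_A)$: this is where $H$-simplicity must be leveraged most carefully — both to prevent the filtration from collapsing early (giving an invariant ideal) and to prevent the bimodule from being "too big" (which would again produce an invariant ideal not containing $1_A$). Everything else is either a direct computation with the $q$-Leibniz rule or an appeal to Wedderburn–Malcev over the perfect field $F$.
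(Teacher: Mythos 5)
Your overall architecture is the right one --- identify $\ker v$ with the graded-simple algebra $B$, split $A$ into layers interchanged by $v$ and a section $\varphi$ of $v$, and extract the quantum-binomial product rule from the $q$-Leibniz rule --- but the proposal is organized around the wrong filtration, and several of the steps you flag as ``to be established'' in fact fail as stated. The chain $V_i=v^iA$ is decreasing with deepest nonzero term $V_{m-1}=\ker v$, which contains $1_A$; so $J(A)$ is not assembled from $V_1,\dots,V_{m-1}$, and the assertion that $V_i$ is a rank-one free $B$-bimodule generated by $v^i(1_A)$ is empty, because $v1_A=0$ (as you compute yourself) while $\dim V_i=(m-i)\dim B$. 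The paper works from the bottom up: it chooses a minimal graded ideal $\tilde J$ inside the last nonzero power of $J(A)$, proves that the $J_k=\sum_{i\leqslant k}v^i\tilde J$ are graded ideals with irreducible, pairwise $c$-isomorphic $(A,A)$-bimodule quotients, and deduces $A=\bigoplus_i v^i\tilde J$; the map $\varphi$ is then the inverse of $v$ along these layers, not ``$v$ suitably normalized'' (the two move in opposite directions along the filtration). Moreover, graded-simplicity of $\ker v$ does not follow by running the uniqueness argument of Theorem~\ref{TheoremTaftSimpleNonSemiSimplePresent} backwards: knowing that every nonzero $H$-invariant ideal of $A$ meets $\ker v$ says nothing about ideals of $\ker v$, since a proper graded ideal of $\ker v$ need not generate a proper $H$-invariant ideal of $A$. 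The paper obtains simplicity of $A/J(A)$ (and the existence of $1_A$, which you assume from the outset) from the Peirce decomposition of $A$ with respect to the unit of a graded Wedderburn--Malcev complement, combined with the $c$-isomorphism of all the bimodule composition factors.

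The most serious gap is the count of layers. You propose that an early collapse $v^kA=0$, $k<m$, would yield an $H$-invariant ideal; it would not. If the decomposition stops at $t+1<m$ layers, the subspaces $\bigoplus_{j\geqslant k}\varphi^j(B)$ are ideals but not $v$-stable, while the $v$-stable subspaces $\bigoplus_{j\leqslant k}\varphi^j(B)$ contain $1_A$ and are not ideals, so no contradiction with $H$-simplicity arises, and $v^{t+1}A=0$ with $t+1<m$ is perfectly compatible with the relation $v^m=0$. The paper pins down $t=m-1$ by computing $0=v\bigl(\varphi^t(1_A)\varphi(1_A)\bigr)=(t+1)_\zeta\,\varphi^t(1_A)$ and using $\varphi^t(1_A)\neq 0$, which forces $(t+1)_\zeta=0$; you need this computation or a substitute for it. Finally, your induction for the product formula tacitly uses $\varphi(v(x))=x$, but in the actual setup $v\varphi(a)-a$ only lies in $\tilde J$, and one must first establish $\tilde J\,(\im\varphi)=(\im\varphi)\,\tilde J=0$ (i.e.\ that $\tilde J$ annihilates $J(A)$ on both sides) for the inductive step to close; this point is absent from your sketch.
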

\begin{corollary}
Let $A$ be a finite dimensional $H_{m^2}(\zeta)$-simple algebra over $F$
where $F$ is a field of characteristic $0$, an algebraically closed field, or a finite field.
Suppose $J:=J(A)\ne 0$.
Then $A$ is isomorphic to an algebra from Theorem~\ref{TheoremTaftSimpleNonSemiSimplePresent}.
\end{corollary}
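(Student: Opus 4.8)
Fields of characteristic $0$, algebraically closed fields and finite fields are perfect, so this corollary is the special case of Theorem~\ref{TheoremTaftSimpleNonSemiSimpleClassify}; it remains to explain how that theorem is proved, i.e.\ for $A$ a finite dimensional $H_{m^2}(\zeta)$-simple algebra over a perfect field $F$ with $J:=J(A)\ne 0$. The plan is to recover the presentation of Theorem~\ref{TheoremTaftSimpleNonSemiSimplePresent} in three stages. \emph{Stage 1 (a $\langle c\rangle$-invariant Wedderburn--Malcev splitting).} Since $F$ is perfect, $A=B_0\oplus J$ as vector spaces for a maximal semisimple subalgebra $B_0$, and since $\ch F\nmid m$ one may take $B_0$ to be $\langle c\rangle$-invariant (Taft's theorem on invariant Wedderburn factors); thus $B_0$ is a semisimple $\mathbb Z_m$-graded subalgebra and $J$ is a $\langle c\rangle$-invariant ideal. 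Moreover $A^2$ is a nonzero $H$-invariant ideal, so $A^2=A$; hence $A$ is not nilpotent, $J\ne A$, $A$ is unital, and $A$ has no nonzero $H$-invariant nilpotent ideal. Using the $\zeta$-Leibniz identity $v^n(xy)=\sum_{k=0}^n\binom{n}{k}_\zeta(c^{n-k}v^kx)(v^{n-k}y)$ one checks by induction that $\tilde J:=\sum_{i=0}^{m-1}v^iJ$ is a two-sided ideal; it is visibly $\langle c\rangle$- and $v$-invariant, hence an $H$-invariant ideal containing $J\ne 0$, so
$$A=J+vJ+\dots+v^{m-1}J.$$

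\emph{Stage 2 (the $v$-filtration and the section $\varphi$).} Put $A_i:=\ker v^{i+1}$ for $0\le i\le m-1$, so that $A_0=\ker v$, $A_{m-1}=A$, $vA_i\subseteq A_{i-1}$, each $A_i$ is $\langle c\rangle$-invariant, and the same Leibniz identity gives $A_pA_q\subseteq A_{p+q}$ (with $A_j:=A$ for $j\ge m-1$); in particular $B:=\ker v$ is a $\mathbb Z_m$-graded subalgebra. The core of the argument is to show that $A$ is a twisted analogue of $B\otimes F[x]/(x^m)$: the filtration $\{A_i\}$ is \emph{uniform}, meaning $vA_i=A_{i-1}$ for every $i$ and $v$ induces graded isomorphisms $A_i/A_{i-1}\to A_{i-1}/A_{i-2}$, equivalently $\dim A=m\dim\ker v$. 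Granting uniformity, one chooses a $\langle c\rangle$-equivariant section: for homogeneous $b\in B$ one picks $\varphi^k(b)$, $1\le k\le m-1$, with $\varphi^0=\id$, $v\,\varphi^k(b)=\varphi^{k-1}(b)$, and $\deg\varphi^k(b^{(\ell)})=\ell+k$. Then $\varphi^k$ is injective, $\varphi^k(B)\subseteq A_k$, $\varphi^k(B)\cap A_{k-1}=0$, $A_{k+\ell}=\bigoplus_{j=0}^{k+\ell}\varphi^j(B)$, and by uniformity $A=\bigoplus_{k=0}^{m-1}\varphi^k(B)$.

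\emph{Stage 3 (multiplication and graded-simplicity of $B$).} For $a,b\in B$ one has $v^i\varphi^k(a)=0$ for $i>k$ and $v^k\varphi^k(a)=a$, so in $v^{k+\ell}(\varphi^k(a)\varphi^\ell(b))$ the $\zeta$-Leibniz identity leaves exactly one surviving term, $\binom{k+\ell}{k}_\zeta(c^\ell a)b$. Since $\varphi^k(a)\varphi^\ell(b)\in A_{k+\ell}=\bigoplus_{j\le k+\ell}\varphi^j(B)$, this identifies its top component and shows $\varphi^k(a)\varphi^\ell(b)\equiv\binom{k+\ell}{k}_\zeta\varphi^{k+\ell}((c^\ell a)b)$ modulo $A_{k+\ell-1}$; applying $v^p$ with $1\le p<k+\ell$ and inducting on $k+\ell$ kills all intermediate components, and a final normalization of the section $\varphi$ (absorbing the remaining bottom term, and using $v^m=0$ for $k+\ell\ge m$) upgrades this to the exact equality $\varphi^k(a)\varphi^\ell(b)=\binom{k+\ell}{k}_\zeta\varphi^{k+\ell}((c^\ell a)b)$ of Theorem~\ref{TheoremTaftSimpleNonSemiSimplePresent}. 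Finally $B=\ker v$ is simple as a $\mathbb Z_m$-graded algebra: for a nonzero graded ideal $B_1\subseteq B$ the subspace $\bigoplus_k\varphi^k(B_1)$ is a nonzero two-sided ideal, $\langle c\rangle$-invariant and mapped into itself by $v$ (since $v\varphi^k(b_1)=\varphi^{k-1}(b_1)$ and $vb_1=0$), hence an $H$-invariant ideal, so it equals $A$ and $B_1=B$. Therefore $A$ is isomorphic, as an $H_{m^2}(\zeta)$-module algebra, to the algebra assigned to the simple $\mathbb Z_m$-graded algebra $B=\ker v$ by Theorem~\ref{TheoremTaftSimpleNonSemiSimplePresent}.

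\emph{Main obstacle.} The delicate step is the uniformity of the $v$-filtration in Stage 2; this is where $H$-simplicity is used most essentially. One must rule out degenerate Jordan structure of the nilpotent operator $v$ (for instance $v^{m-1}=0$, which would place $A$ outside the list of Theorem~\ref{TheoremTaftSimpleNonSemiSimplePresent}), by playing the ideal $J+vJ+\dots+v^{m-1}J$ against the kernel filtration $\ker v\subseteq\ker v^2\subseteq\dots$ and exploiting the structure of the semisimple subalgebra $B_0$ (matrix blocks cyclically permuted by $c$, as in Section~\ref{SectionTaftSimpleSemisimple}). The case $m=2$ carried out in~\cite{ASGordienko11} is the prototype for this analysis.
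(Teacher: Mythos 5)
For the corollary as stated, your first sentence is the entire proof and is exactly the paper's (implicit) argument: fields of characteristic $0$, algebraically closed fields and finite fields are perfect, so Theorem~\ref{TheoremTaftSimpleNonSemiSimpleClassify} applies verbatim. If that theorem is taken as given, you are done.

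The rest of your text is a re-proof of Theorem~\ref{TheoremTaftSimpleNonSemiSimpleClassify} itself, and there it both diverges from the paper and has a genuine gap. You filter $A$ by the kernels $A_i=\ker v^{i+1}$ and reduce everything to the ``uniformity'' claim $\dim A=m\dim\ker v$, which you explicitly leave unproved (your ``delicate step''). The paper avoids this issue by filtering from the other end: it takes a \emph{minimal} graded ideal $\tilde J\subseteq J^{\ell-1}$ inside the last nonzero power of the radical, so that $\tilde J$ is an irreducible graded $(A,A)$-bimodule annihilated by $J$ on both sides, and shows that $J_k=\sum_{i\leqslant k}v^i\tilde J$ are graded ideals whose successive quotients are $c$-isomorphic images of $\tilde J$, hence irreducible or zero; $H$-simplicity then forces $A=\bigoplus_{i=0}^t v^i\tilde J$ with equidimensional layers (Lemma~\ref{LemmaTaftSimpleNonSemiSimpleClassifySumDirect}), so no separate uniformity argument is needed. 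The remaining point~--- that the Jordan structure is full, i.e.\ $t=m-1$~--- is precisely what your sketch cannot reach; the paper extracts it from the quantum-binomial multiplication rule via $0=v\bigl(\varphi^t(1_A)\varphi(1_A)\bigr)=(t+1)_\zeta\,\varphi^t(1_A)$ with $\varphi^t(1_A)\ne 0$, whence $(t+1)_\zeta=0$ and $t=m-1$. Note also that ``$A^2=A$, hence $A$ is unital'' is not a valid inference; unitality is obtained in Lemma~\ref{LemmaTaftSimpleNonSemiSimpleClassifyUnity} from Taft's invariant Wedderburn decomposition and the $c$-isomorphism of the bimodule factors, which forces $A=eAe$. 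So: the corollary is fine as a one-line consequence, but your proposed alternative proof of the underlying theorem does not close.
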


In order to prove Theorem~\ref{TheoremTaftSimpleNonSemiSimpleClassify}, we need several auxiliary lemmas.

Let $M_1$ and $M_2$ be two $(A,A)$-graded bimodules for a $\mathbb Z_m$-graded algebra $A$.
We say that a linear isomorphism $\varphi \colon M_1 \to M_2$ is a \textit{$c$-isomorphism}
of $M_1$ and $M_2$ if there exists $r\in\mathbb Z$ such that $c\varphi(b) = \zeta^{-r} \varphi(cb)$, $\varphi(ab)=(c^r a)\varphi(b)$,
$\varphi(ba)=\varphi(b)a$ for all $b\in M_1$, $a\in A$.

\begin{lemma}\label{LemmaTaftSimpleNonSemiSimpleClassifySumDirect}
Suppose $A$ is a finite dimensional $H_{m^2}(\zeta)$-simple algebra over a field $F$ and $J:=J(A)\ne 0$.
Let $J^\ell = 0$, $J^{\ell-1} \ne 0$. Choose a minimal $\mathbb Z_m$-graded $A$-ideal $\tilde J \subseteq J^{\ell-1}$.
Then for any $k$, $J_k := \sum_{i=0}^{i=k} v^i \tilde J$ is a graded ideal of $A$ and $A = \bigoplus_{i=0}^t v^i \tilde J$ (direct sum of graded subspaces) for some $1 \leqslant t \leqslant m-1$.
Moreover, $J_k/J_{k-1}$, $0 \leqslant k \leqslant t$, are irreducible graded $(A,A)$-bimodules $c$-isomorphic to each other. (Here $J_{-1} := 0$.)
\end{lemma}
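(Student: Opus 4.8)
The plan is to exploit the interplay between the skew-derivation $v$ (with $v^m=0$) and the nilpotent radical, building everything out of the single minimal graded ideal $\tilde J\subseteq J^{\ell-1}$. First I would record the basic compatibility: since $\tilde J$ is a graded $A$-ideal and $v$ is a skew-derivation with $v(ab)=(ca)(vb)+(va)b$, the subspace $v^i\tilde J$ is again closed under left and right multiplication by $A$ — for the left action one uses $a\cdot v^i x = $ (a combination of $v^j(a'x)$ terms coming from iterating the skew-Leibniz rule, with the group-like twists $c^i a$), and for the right action one simply has $(v^i x)a = v^i(xa) - (\text{lower }v\text{-terms acting on }xa)$; either way $v^i\tilde J + v^{i-1}\tilde J + \dots + \tilde J$ absorbs multiplication, so $J_k=\sum_{i=0}^k v^i\tilde J$ is a graded ideal. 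Each $J_k$ is visibly $c$-invariant (because $v$ and $c$ satisfy $vc=\zeta cv$, so $v^i$ sends graded components to graded components shifted by $i$) and $v$-invariant by construction, hence $H_{m^2}(\zeta)$-invariant. Since $\tilde J\ne 0$ and $v^m=0$, there is a largest $t\le m-1$ with $v^t\tilde J\ne 0$; then $J_t$ is a nonzero $H_{m^2}(\zeta)$-invariant ideal of the $H_{m^2}(\zeta)$-simple algebra $A$, so $J_t=A$, giving $A=\sum_{i=0}^t v^i\tilde J$.

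Next I would identify the successive quotients. The map $v$ induces a surjection $\bar v\colon J_{k-1}/J_{k-2}\to J_k/J_{k-1}$ for each $k$; I claim it is a $c$-isomorphism in the sense just defined, with shift parameter $r=1$. Indeed $vc=\zeta cv$ on $A$ forces $c\,\bar v(\bar b)=\zeta^{-1}\bar v(c\bar b)$ on the quotients (rewriting $\zeta^{-1}$ for the grading shift). For the bimodule structure: modulo $J_{k-1}$ the skew-Leibniz rule $v(ab)=(ca)(vb)+(va)b$ collapses, since $(va)b\in v\tilde J\cdot A\subseteq$ (lower $J$'s when $a\in J_{k-2}$-part, or is absorbed), to give $\bar v(ab)\equiv (ca)\bar v(b)$ on the left and $\bar v(ba)\equiv \bar v(b)a$ on the right — exactly the defining relations of a $c$-isomorphism with $r=1$. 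Composing these we get $v^k\colon J_0/J_{-1}=\tilde J\to J_k/J_{k-1}$ is a $c$-isomorphism with shift $r=k$. Finally, $\tilde J$ itself is an irreducible graded $(A,A)$-bimodule: it was chosen minimal among graded $A$-ideals inside $J^{\ell-1}$, and since $J^\ell=0$ the two-sided $A$-module structure on $\tilde J$ factors through $A/J$, which is semisimple, so a graded sub-bimodule of $\tilde J$ would be a graded $A$-ideal of $A$ contained in $J^{\ell-1}$ — contradicting minimality unless it is $0$ or $\tilde J$. Transporting irreducibility along the $c$-isomorphisms $v^k$ shows each $J_k/J_{k-1}$ is irreducible, and they are all $c$-isomorphic to $\tilde J$, hence to each other.

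It remains to upgrade the sum $A=\sum_{i=0}^t v^i\tilde J$ to a \emph{direct} sum. Here the point is a dimension/length count: by the previous paragraph $\dim v^i\tilde J=\dim\tilde J$ for all $0\le i\le t$ (each $v$ restricted to $v^{i-1}\tilde J$ is injective, as its kernel would be a proper graded sub-bimodule of the irreducible $v^{i-1}\tilde J$, and it is nonzero since $i\le t$), so $\dim A\le (t+1)\dim\tilde J$ with equality iff the sum is direct; conversely the chain $0=J_{-1}\subsetneq J_0\subsetneq\dots\subsetneq J_t=A$ has $t+1$ strict inclusions each contributing a composition factor of dimension $\dim\tilde J$, forcing $\dim A=(t+1)\dim\tilde J$. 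Hence the sum is direct. \textbf{The main obstacle} I anticipate is verifying cleanly that $v^i\tilde J+\dots+\tilde J$ is closed under the \emph{left} action of $A$: unlike the right action, expanding $a\cdot v^i x$ via the skew-Leibniz rule produces terms $v^j((c^{?}a)\,x')$ where the group-like exponent and the binomial-type coefficients from iterating $\Delta v=c\otimes v+v\otimes 1$ must be tracked carefully, and one needs $\tilde J$'s ideal property at each stage; this is essentially the same bookkeeping that underlies Lemma \ref{LemmaTaftSimpleSemisimpleFormula}, and I would handle it by an induction on $i$ rather than a closed formula.
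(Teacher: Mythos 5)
Your overall strategy is the paper's: show the $J_k$ are graded ideals via the skew-Leibniz rule, use $H_{m^2}(\zeta)$-invariance of the top one to force $J_t=A$, let $v$ induce $c$-isomorphisms between successive quotients, and close with a dimension count. Two of your justifications, however, do not hold as written. The first is the injectivity claim: you argue that $v$ restricted to $v^{i-1}\tilde J$ is injective because its kernel ``would be a proper graded sub-bimodule of the irreducible $v^{i-1}\tilde J$.'' But $v^{i-1}\tilde J$ is not a sub-bimodule of $A$ (only $J_{i-1}=\sum_{j\leqslant i-1}v^j\tilde J$ is an ideal; multiplication by $A$ pushes $v^{i-1}\tilde J$ into lower $v$-powers as well), and $\ker v$ is not closed under the bimodule action, since $v(ab)=(ca)(vb)+(va)b$ need not vanish when $vb=0$. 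The Leibniz correction terms only die after passing to quotients: the induced maps $\bar v_k\colon J_k/J_{k-1}\to J_{k+1}/J_k$ are honest $c$-twisted bimodule morphisms, surjective by construction, so each $J_{k+1}/J_k$ is either zero or irreducible and $c$-isomorphic to $\tilde J$; directness then follows from $\dim(J_k/J_{k-1})\leqslant\dim v^k\tilde J\leqslant\dim\tilde J$ with equality forced, and no injectivity of $v$ on the subspaces $v^{i-1}\tilde J$ themselves is ever needed.

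The second gap is your choice of $t$. You take $t$ to be the largest index with $v^t\tilde J\ne 0$ and then assert that the chain $J_{-1}\subsetneq J_0\subsetneq\cdots\subsetneq J_t$ is strict. But $v^k\tilde J\ne 0$ does not by itself exclude $v^k\tilde J\subseteq J_{k-1}$, i.e.\ $J_k=J_{k-1}$; with your $t$ the sum $\sum_{i=0}^t v^i\tilde J$ could then fail to be direct. The right definition is the last index before the chain stabilizes: since each $\bar v_k$ is surjective, $J_k=J_{k-1}$ implies $J_{k'}=J_{k-1}$ for all $k'\geqslant k$, so one takes $t$ minimal with $J_t=A$ (such an index exists because $J_{m-1}$ is $v$- and $c$-invariant, hence equals $A$); strictness up to this $t$ is then automatic and the dimension count finishes the proof. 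Both defects are repairable with the quotient machinery you already set up for the $c$-isomorphism step, but as written the argument does not go through.
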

\begin{proof}
Since for any $a \in \tilde J$, $b\in A$, the element $(v^k a) b$ can be presented as a linear combination
of elements $v^i((c^{k-i} a)(v^{k-i}b))$, each $J_k := \sum_{i=0}^{i=k} v^i \tilde J$ is a graded ideal of $A$.

 Recall that $v^m =0$. Thus $J_m$ is an $H_{m^2}(\zeta)$-invariant ideal of $A$.
Hence  $A=J_m$.

Let $\varphi_k \colon J_k/J_{k-1} \to J_{k+1}/J_k$ where $0 \leqslant k \leqslant m-1$, be the map
defined by $\varphi_k (a + J_{k-1}) = va + J_k$. Then $c\varphi_k (\bar b) = \zeta^{-1}\varphi_k (c\bar b)$,
$$\varphi_k (a \bar b) = v(ab)+J_k = (ca)(vb)+(va)b+J_k = (ca)(vb)+J_k
=(ca)\varphi_k (\bar b),$$
$$\varphi_k (\bar b a) = v(ba)+J_k = (cb)(va)+(vb)a+J_k = (vb)a+J_k
=\varphi_k (\bar b) a$$
for all $a\in A$, $b \in J_k$.
Note that $\tilde J = J_0/J_{-1}$ is an irreducible graded $(A,A)$-bimodule.
Therefore, $J_{k+1}/J_k$ is an irreducible graded $(A,A)$-bimodule or zero for any $0 \leqslant k \leqslant m-1$. Thus if $A = J_t$, $A \ne J_{t-1}$,
then $\dim J_t = (t+1)\dim \tilde J$ and $A = \bigoplus_{i=0}^t v^i \tilde J$ (direct sum of graded subspaces).
\end{proof}

\begin{lemma}\label{LemmaTaftSimpleNonSemiSimpleClassifyUnity}
Suppose $A$ is a finite dimensional $H_{m^2}(\zeta)$-simple algebra over a perfect field $F$
where $J(A)\ne 0$.
Then $A$ has unity, $A/J(A)$ is a simple $\mathbb Z_m$-graded algebra,
and $J_{t-1} = J(A)$. (The ideal $J_{t-1}$ was defined in Lemma~\ref{LemmaTaftSimpleNonSemiSimpleClassifySumDirect}.)
\end{lemma}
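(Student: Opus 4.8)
The plan is to read everything off the filtration $0\subset J_0\subset J_1\subset\dots\subset J_t=A$ of Lemma~\ref{LemmaTaftSimpleNonSemiSimpleClassifySumDirect}, using repeatedly that $A^2=A$ (because $A^2$ is a nonzero $H_{m^2}(\zeta)$-invariant ideal) and that $J:=J(A)$ is a graded ideal, although not a $v$-invariant one. \emph{First}, I would show that $A/J_{t-1}$ is a simple $\mathbb Z_m$-graded algebra. Indeed $A/J_{t-1}=J_t/J_{t-1}$ is an irreducible graded $(A,A)$-bimodule by Lemma~\ref{LemmaTaftSimpleNonSemiSimpleClassifySumDirect}; the graded $(A,A)$-sub-bimodules of the quotient \emph{algebra} $A/J_{t-1}$ are precisely its graded two-sided ideals, and $(A/J_{t-1})^2=A/J_{t-1}\neq 0$ because $A^2=A$ while $\dim J_{t-1}<\dim A$. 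A simple graded algebra has zero graded radical, hence zero radical, so $A/J_{t-1}$ is semisimple and therefore $J\subseteq J_{t-1}$.

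\emph{Second}, I would upgrade this to $A/J$. Decompose $A/J=T_1\oplus\dots\oplus T_N$ into its graded-simple ideals. Since $J\subseteq J_{t-1}$ is a graded sub-bimodule of $A$, the semisimple graded $(A,A)$-bimodule $A/J=\bigoplus_\alpha T_\alpha$ is a quotient of $A$, so each $T_\alpha$ occurs among the composition factors of $A$ as a graded $(A,A)$-bimodule, all of which are $c$-isomorphic to $\tilde J$ by Lemma~\ref{LemmaTaftSimpleNonSemiSimpleClassifySumDirect}; in particular the $T_\alpha$ are pairwise $c$-isomorphic. But a $c$-isomorphism $\varphi\colon T_\alpha\to T_\beta$ obeys $\varphi(b\,1_{T_\beta})=\varphi(b)\,1_{T_\beta}$, where $1_{T_\beta}$ is the identity of the block $T_\beta$; for $\alpha\neq\beta$ the left-hand side is $\varphi(0)=0$ by orthogonality of the blocks while the right-hand side is $\varphi(b)$, so $\varphi=0$, which is impossible. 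Hence $N=1$, i.e. $A/J$ is simple $\mathbb Z_m$-graded; and then $A/J_{t-1}$, being a nonzero quotient of the graded-simple algebra $A/J$, coincides with $A/J$, so $J=J_{t-1}$.

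\emph{Third}, the unity. As $A/J$ is semisimple it has an identity, necessarily of degree $0$; lifting it through the nilpotent ideal $J$ yields an idempotent $e\in A^{(0)}$ with $ce=e$ and $ea-a,\ ae-a\in J$ for all $a\in A$. The crucial point is that $e$ is already a two-sided identity on $\tilde J$: $\tilde J$ is an irreducible graded bimodule annihilated by $J$ (the sub-bimodule $J\tilde J$ cannot be all of $\tilde J$ since $J$ is nilpotent), hence a bimodule over $A/J$ on which the identity acts as $0$ or as the identity; and $e\tilde J=0$ would force $A\tilde J=0$, i.e. $\tilde J\subseteq\{y\in A\mid Ay=0\}$, which is an $H_{m^2}(\zeta)$-invariant ideal and hence all of $A$, contradicting $A^2=A$ (the case $\tilde Je=0$ is excluded symmetrically using $\{y\mid yA=0\}$). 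So $ex=xe=x$ on $\tilde J$. Now I would feed $a=e$, $b=x\in\tilde J$ into the quantum Leibniz rule $v^i(ab)=\sum_{k=0}^{i}\binom{i}{k}_\zeta (c^k v^{i-k}a)(v^k b)$: the left-hand side is $v^i(ex)=v^ix$, the $k=i$ term equals $e(v^ix)$ since $c^ie=e$, and every remaining term lies in the two-sided ideal $J_{i-1}$ because it carries a factor $v^k x\in J_k\subseteq J_{i-1}$ with $k<i$. Thus $(1-e)(v^i\tilde J)\subseteq J_{i-1}$ and, symmetrically, $(v^i\tilde J)(1-e)\subseteq J_{i-1}$ (here $(1-e)z$ abbreviates $z-ez$, and likewise on the right); consequently $(1-e)J_k\subseteq J_{k-1}$ and $J_k(1-e)\subseteq J_{k-1}$ for every $k$. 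Since $1-e$ fixes $(1-e)A$ elementwise and $(1-e)\tilde J=0$, iterating down the chain $J_t\supseteq J_{t-1}\supseteq\dots\supseteq J_0$ gives $(1-e)A=A(1-e)=0$, i.e. $e$ is the identity of $A$.

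I expect the third step to be the main obstacle: it is the only place where the specific compatibility between the skew-derivation $v$ and the (non-$v$-invariant) radical has to be exploited, and the descent along the filtration must be organised so as to use that each $J_i$ is a genuine two-sided ideal of $A$. The first two steps are short once the bimodule bookkeeping of Lemma~\ref{LemmaTaftSimpleNonSemiSimpleClassifySumDirect} — irreducibility of the layers $J_k/J_{k-1}$ and their pairwise $c$-isomorphism, with the right action untwisted — is taken as input.
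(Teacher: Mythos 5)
Your argument is correct, but it is organised quite differently from the paper's. The paper first invokes Taft's theorem on invariant Wedderburn factors (this is where the perfect-field hypothesis enters) to split $A=B\oplus J(A)$ with $B\cong A/J(A)$ a graded semisimple subalgebra, and then applies the Peirce decomposition of $A$ with respect to $e=1_B$: since every composition factor of $A$ as a graded $(B,B)$-bimodule is $c$-isomorphic to $\tilde J$ by Lemma~\ref{LemmaTaftSimpleNonSemiSimpleClassifySumDirect}, the summands with degenerate $B$-action must vanish, giving $A=eAe$ (hence the unity) and the graded simplicity of $A/J(A)$ in one stroke; $J_{t-1}=J(A)$ then follows from a dimension count. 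You reverse the order and stay entirely in quotients: graded simplicity of $A/J_{t-1}$ comes straight from irreducibility of the top layer together with $A^2=A$, giving $J(A)\subseteq J_{t-1}$; then Jordan--H\"older plus the observation that two distinct blocks of a graded semisimple algebra cannot be $c$-isomorphic (because the right action in a $c$-isomorphism is untwisted and $b\cdot 1_{T_\beta}=0$ for $b\in T_\alpha$, $\alpha\ne\beta$) forces $A/J(A)$ to be graded-simple and $J(A)=J_{t-1}$ --- this is essentially the same ``no two blocks are $c$-isomorphic'' trick the paper uses inside $eAe$, transplanted to the quotient. Your construction of the unity is the genuinely new part: you lift an idempotent through the nilpotent graded ideal $J(A)$ (no Wedderburn--Malcev complement needed), show $e$ acts as the identity on $\tilde J$ by ruling out $A\tilde J=0$ and $\tilde J A=0$ via the $H$-invariance of the annihilators, and then descend $(1-e)J_i\subseteq J_{i-1}$ along the filtration using the iterated coproduct $\Delta(v^i)=\sum_{k=0}^{i}\binom{i}{k}_\zeta\, c^kv^{i-k}\otimes v^k$ (which you should state and verify, as the paper never writes it down). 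The trade-off: the paper's route is shorter once Taft's theorem is granted; yours is more self-contained and, as far as I can see, does not actually use perfectness of $F$, only that $J(A)$ is $c$-invariant and hence graded.
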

\begin{proof} Note that $J(A)$ annihilates all irreducible $(A,A)$-bimodules. Thus 
 $J_k/J_{k-1}$
 are irreducible $(A/J(A),A/J(A))$-bimodules.
By~\cite{Taft}, there exists a maximal $\mathbb Z_m$-graded semisimple subalgebra $B\subseteq A$
such that $A = B \oplus J(A)$ (direct sum of $\mathbb Z_m$-graded subspaces), $B \cong A/J(A)$.
Note that $J_k/J_{k-1}$  are irreducible $(B,B)$-bimodules.
  Let $e$ be the unity of $B$. Then $$A = eAe \oplus (\id_A-e)A e \oplus e A(\id_A-e) \oplus (\id_A-e)A(\id_A-e)\text{ (direct sum of graded subspaces)}$$
 where $\id_A$ is the identity map.
 Note that $eAe$ is a completely reducible graded left $B \otimes  B^{\mathrm{op}}$-module,
 $(\id_A-e)A e$ is a completely reducible graded right $B$-module,
  $e A(\id_A-e)$ is a completely reducible  graded left $B$-module,
  and $(\id_A-e)A(\id_A-e)$ is a graded subspace with zero $B$-action.
  Thus $A$ is a sum of irreducible graded $(B,B)$-bimodules or bimodules with zero $B$-action. Since by Lemma~\ref{LemmaTaftSimpleNonSemiSimpleClassifySumDirect}, the algebra $A$ has a series of graded $(B,B)$-subbimodules with $c$-isomorphic irreducible factors, the only possibility is that $A=eAe$, $A/J(A)$ is a  simple graded algebra.
  Therefore, $J(A)$ is the unique maximal graded ideal.
Note that all $J_k/J_{k-1}\cong A/J(A)$ and, in particular, $A/J_{t-1}=J_t/J_{t-1} \cong A/J(A)$ (as vector spaces). Hence $\dim J_t = \dim J(A)$ and $J_{t-1} = J(A)$.  \end{proof}

\begin{lemma}\label{LemmaTaftSimpleNonSemiSimpleClassifyFormula}
Suppose $A$ is a finite dimensional $H_{m^2}(\zeta)$-simple algebra over a perfect field $F$
where $J(A)\ne 0$.
Define the linear map $\varphi \colon A \to A$ by $\varphi(v^k a) = v^{k-1} a$
for all $a \in\tilde J$, $1 \leqslant k \leqslant t$, $\varphi(\tilde J) = 0$.
(See Lemma~\ref{LemmaTaftSimpleNonSemiSimpleClassifySumDirect}.)
Then \begin{equation}\label{EqQuantumBinomPhi}
\varphi^k(a)\varphi^\ell(b)=\binom{k+\ell}{k}_\zeta\ \varphi^{k+\ell}((c^\ell a)b) \text{ for all }a, b\in \ker v \text{ and } 0 \leqslant k,\ell < m.
\end{equation}
\end{lemma}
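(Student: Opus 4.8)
The plan is to establish the multiplication formula \eqref{EqQuantumBinomPhi} by first pinning down how $\varphi$ interacts with the algebra structure and the $H_{m^2}(\zeta)$-action, then proving the identity by a double induction on $k+\ell$. First I would record the basic compatibility of $\varphi$ with $v$ and $c$: by construction $v\varphi = \id$ on $v\tilde J + \dots + v^t\tilde J = J(A)$ and $\varphi v = \id$ on $\tilde J \oplus v\tilde J \oplus \dots \oplus v^{t-1}\tilde J$, so on the complement of $\ker v$ we have $v\varphi(x)=x$ and, since $c$ multiplies the degree of $v$ by $\zeta$, one checks $c\varphi = \zeta^{-1}\varphi c$. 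Via Lemma~\ref{LemmaTaftSimpleNonSemiSimpleClassifySumDirect}, $\varphi$ is (up to the shift in $c$-degree) the $c$-isomorphism $J_k/J_{k-1}\to J_{k+1}/J_k$ lifted to the direct-sum decomposition $A=\bigoplus_{i=0}^t v^i\tilde J$, so it satisfies $\varphi(ax)=(ca)\varphi(x)$ and $\varphi(xa)=\varphi(x)a$ for $a\in A$, $x\in A$; these are exactly the $c$-isomorphism relations with $r=1$, and iterating gives $\varphi^k(ax)=(c^k a)\varphi^k(x)$, $\varphi^k(xa)=\varphi^k(x)a$.

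Next I would handle the boundary cases of \eqref{EqQuantumBinomPhi}. When $k=\ell=0$ it is the trivial identity $ab=ab$ for $a,b\in\ker v=B$. When $\ell=0$ it reads $\varphi^k(a)b = \varphi^k(ab)$, which is the right-module relation just established; when $k=0$ it reads $a\varphi^\ell(b)=\varphi^\ell((c^\ell a)b)$, which follows from the left-module relation $\varphi^\ell(xa)=\varphi^\ell(x)a$ applied with $x$ such that $\varphi^\ell(x)=a$... more directly, $a\varphi^\ell(b)$: write $a=\varphi^0(a)$ and use $\varphi^\ell(\cdot b)$ — this needs care, so I would instead derive the $k=0$ case from the general inductive step or from a separate short argument using $v^\ell(a\varphi^\ell(b)) = $ Leibniz expansion. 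Either way, the base layer $k+\ell\le 1$ is immediate from the module relations.

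For the inductive step I would apply $v$ to both sides of the desired identity $\varphi^k(a)\varphi^\ell(b)$ and use the skew-Leibniz rule $v(xy)=(cx)(vy)+(vx)y$ together with $c\varphi^k(a) = \zeta^{-k}\varphi^k(c\, a)$ and $v\varphi^k(a)=\varphi^{k-1}(a)$ (valid since $a\in\ker v$ so $\varphi^k(a)\in v^k\tilde J$ with $k\ge 1$). This yields
\[
v\bigl(\varphi^k(a)\varphi^\ell(b)\bigr) = \zeta^{-k}\varphi^k(c\,a)\,\varphi^{\ell-1}(b) + \varphi^{k-1}(a)\,\varphi^\ell(b),
\]
and by the induction hypothesis (both terms have total degree $k+\ell-1$) the right-hand side equals
\[
\zeta^{-k}\binom{k+\ell-1}{k}_\zeta \varphi^{k+\ell-1}\bigl((c^{\ell-1}c\,a)b\bigr) + \binom{k+\ell-1}{k-1}_\zeta \varphi^{k+\ell-1}\bigl((c^\ell a)b\bigr)
= \left(\zeta^{-k}\binom{k+\ell-1}{k}_\zeta + \binom{k+\ell-1}{k-1}_\zeta\right)\varphi^{k+\ell-1}\bigl((c^\ell a)b\bigr).
\]
The quantum Pascal identity $\binom{n}{k}_\zeta = \zeta^{k}\binom{n-1}{k}_\zeta + \binom{n-1}{k-1}_\zeta$ (or its $\zeta\mapsto\zeta^{-1}$ variant, matching the convention here) shows the scalar is $\binom{k+\ell}{k}_\zeta$, so $v$ sends the difference $\varphi^k(a)\varphi^\ell(b) - \binom{k+\ell}{k}_\zeta\varphi^{k+\ell}((c^\ell a)b)$ to $0$. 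Both summands lie in $v^{k+\ell}\tilde J$, and $v$ restricted to $v^{k+\ell}\tilde J$ is injective for $k+\ell\le t$ (it is the $c$-isomorphism onto $v^{k+\ell-1}\tilde J$ from Lemma~\ref{LemmaTaftSimpleNonSemiSimpleClassifySumDirect}); for $k+\ell > t$ the product $\varphi^{k+\ell}(\cdot)=0$ by definition and one must separately check the left side vanishes, which follows because $\varphi^k(a)\varphi^\ell(b)\in v^{k+\ell}\tilde J = 0$. Hence the difference is zero and the identity holds.

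The main obstacle I anticipate is twofold: first, carefully justifying that $v$ is injective on each homogeneous layer $v^i\tilde J$ for $i\le t$ and zero beyond, so that "killed by $v$" upgrades to "equals zero" — this rests entirely on the direct-sum decomposition and $c$-isomorphism structure from Lemma~\ref{LemmaTaftSimpleNonSemiSimpleClassifySumDirect}, so it is a matter of bookkeeping rather than new ideas; and second, getting the exact form of the quantum Pascal identity to match the $\binom{\ \cdot\ }{\ \cdot\ }_\zeta$ convention (the paper defines $n_\zeta = 1+\zeta+\dots+\zeta^{n-1}$), including tracking the $\zeta^{-k}$ factor that arises from $c\varphi^k = \zeta^{-k}\varphi^k c$ against the $c^\ell$ inside the argument. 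Once those two points are nailed down, the induction is routine.
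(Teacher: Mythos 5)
Your strategy is essentially the paper's: get the $k=0$ and $\ell=0$ cases from the compatibility relations $\varphi(ab)=\varphi(a)b$ and $b\varphi(a)=\varphi((cb)a)$, then induct on $k+\ell$ by applying $v$, using the skew-Leibniz rule, $v\varphi^k(a)=\varphi^{k-1}(a)$, and the quantum Pascal identity. However, the step that closes the induction does not work as written. To upgrade ``$v$ kills the difference'' to ``the difference is zero'' you assert that $\varphi^k(a)\varphi^\ell(b)$ lies in a single layer $v^j\tilde J$ on which $v$ is injective (you write $v^{k+\ell}\tilde J$; with the indexing of Lemma~\ref{LemmaTaftSimpleNonSemiSimpleClassifySumDirect}, where $\ker v$ is the top layer $v^t\tilde J$ and $\varphi$ lowers the $v$-degree, it would have to be $v^{t-k-\ell}\tilde J$). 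But the assertion that the product of the layers $v^{t-k}\tilde J$ and $v^{t-\ell}\tilde J$ lands in the single layer $v^{t-k-\ell}\tilde J$ is precisely the content of the lemma being proved; a priori the product is only known to lie in the ideal $J_{\min(t-k,t-\ell)}$, so ``killed by $v$'' only yields ``lies in $\ker v$''. The repair is cheap but different from what you wrote: for $k,\ell\geqslant 1$ the difference lies in $\im\varphi\cdot A+A\cdot\im\varphi\subseteq J_{t-1}=J(A)$ (Lemma~\ref{LemmaTaftSimpleNonSemiSimpleClassifyUnity}), and $J_{t-1}\cap\ker v=0$ since $v$ maps $\bigoplus_{i=0}^{t-1}v^i\tilde J$ injectively into $\bigoplus_{i=1}^{t}v^i\tilde J$. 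The paper sidesteps the issue entirely by applying $\varphi$ after $v$ and using $\varphi v=\id$ on $J_{t-1}$, together with the observation that the error terms $v\varphi(x)-x\in\tilde J$ are annihilated on both sides by $\im\varphi$, since $(\im\varphi)\tilde J=\tilde J(\im\varphi)=0$.

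A second, smaller problem: the commutation relation is $c\varphi=\zeta\,\varphi c$, not $\zeta^{-1}\varphi c$. Indeed $vc=\zeta cv$ gives $c(v^ka)=\zeta^{-k}v^k(ca)$, hence $c\varphi(v^ka)=\zeta^{-(k-1)}v^{k-1}(ca)=\zeta\,\varphi(c\,v^ka)$. With your sign the scalar produced by the inductive step is $\zeta^{-k}\ell_\zeta+k_\zeta$, which is not $(k+\ell)_\zeta$; with the correct sign it is $\zeta^{k}\ell_\zeta+k_\zeta=(k+\ell)_\zeta$ and the quantum Pascal identity $\zeta^k\binom{k+\ell-1}{k}_\zeta+\binom{k+\ell-1}{k-1}_\zeta=\binom{k+\ell}{k}_\zeta$ closes the induction. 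You flagged this as a point to check, but as stated the computation does not yield the claimed coefficient.
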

\begin{proof}
Note that $\varphi(va)=a$ for all $a\in J_{t-1}$.
Thus the properties of $v$ imply $c\varphi(a) = \zeta \varphi(ca)$, $\varphi(ba)=(c^{-1}b)\varphi(a)$, 
$\varphi(ab)=\varphi(a)b$  for all $a\in v J_{t-1}$, $b\in \ker v$, and therefore for all $a\in A$, $b\in \ker v$ since $A = v J_{t-1} \oplus \tilde J$ (direct sum of graded subspaces) and $\varphi(\tilde J)=0$.
This proves~(\ref{EqQuantumBinomPhi}) for $k=0$ or $\ell=0$.

Recall that, by Lemma~\ref{LemmaTaftSimpleNonSemiSimpleClassifyUnity}, $\im \varphi = J_{t-1}=J(A)$.
Hence $(\im \varphi) \tilde J= \tilde J(\im \varphi) = 0$. Moreover $v\varphi(a)-a \in \tilde J$
for all $a\in A$. Now the case of arbitrary $k$ and $\ell$ is done by induction:
\begin{equation*}\begin{split}
\varphi^k(a)\varphi^\ell(b)= \varphi(v(\varphi^k(a)\varphi^\ell(b)))
=\varphi((c\varphi^k(a))\varphi^{\ell-1}(b)+\varphi^{k-1}(a)\varphi^\ell(b))=\\
\varphi\left((\zeta^k\varphi^k(ca)\varphi^{\ell-1}(b)+\varphi^{k-1}(a)\varphi^\ell(b)\right)=\\
\varphi\left(\zeta^k\binom{k+\ell-1}{k}_\zeta\ \varphi^{k+\ell-1}((c^\ell a)b)+\binom{k+\ell-1}{k-1}_\zeta\ \varphi^{k+\ell-1}((c^\ell a)b) \right)=\\
\left(\zeta^k \binom{k+\ell-1}{k}_\zeta + \binom{k+\ell-1}{k-1}_\zeta\right) \varphi^{k+\ell}((c^\ell a)b)
=\\ \binom{k+\ell}{k}_\zeta\ \varphi^{k+\ell}((c^\ell a)b)
\end{split}\end{equation*}
since \begin{equation*}\begin{split}
\zeta^k \binom{k+\ell-1}{k}_\zeta + \binom{k+\ell-1}{k-1}_\zeta
= \frac{\zeta^k (k+\ell-1)!_\zeta}{k!_\zeta (\ell-1)!_\zeta}+
\frac{(k+\ell-1)!_\zeta}{(k-1)!_\zeta \ell!_\zeta}=\\(\zeta^k \ell_\zeta + k_\zeta)\frac{(k+\ell-1)!_\zeta}{k!_\zeta \ell!_\zeta}=(k+\ell)_\zeta \frac{(k+\ell-1)!_\zeta}{k!_\zeta \ell!_\zeta}
= \frac{(k+\ell)!_\zeta}{k!_\zeta \ell!_\zeta}=\binom{k+\ell}{k}_\zeta.\end{split}\end{equation*}
\end{proof}

\begin{proof}[Proof of Theorem~\ref{TheoremTaftSimpleNonSemiSimpleClassify}.]
By Lemma~\ref{LemmaTaftSimpleNonSemiSimpleClassifyUnity}, there exists unity $1_A \in A$.
Note that $1_A \notin J_{t-1}$ (see the definition in Lemma~\ref{LemmaTaftSimpleNonSemiSimpleClassifySumDirect}),
since $J_{t-1}$ is an ideal. Hence $\varphi^t(1_A)\ne 0$.
(See the definition of the map $\varphi$ in Lemma~\ref{LemmaTaftSimpleNonSemiSimpleClassifyFormula}.)
Since $v\varphi(a)-a \in \tilde J$ for all $a\in A$,
we have $v\varphi^t(1_A)=\varphi^{t-1}(1_A)+j_1$
and $v\varphi(1_A)=1_A+j_2$ for some $j_1, j_2 \in \tilde J$.
Note that $\varphi^t(1_A) \varphi(1_A)=\binom{t+1}{t}_\zeta \varphi^{t+1}(1_A) = 0$.
However \begin{equation*}\begin{split}0=v(\varphi^t (1_A)\varphi(1_A))
=(v\varphi^t(1_A))\varphi(1_A)+(c\varphi^t(1_A))v\varphi(1_A)
=\\
(\varphi^{t-1}(1_A)+j_1)\varphi(1_A)+\zeta^t\varphi^t(1_A)(1_A+j_2)
=\varphi^{t-1}(1_A)\varphi(1_A)+\zeta^t\varphi^t(1_A)1_A
=\\ \left(\binom t{t-1}_\zeta+\zeta^t\right)\varphi^t(1_A)
=(t+1)_\zeta\ \varphi^t(1_A)\end{split}\end{equation*}
since by Lemma~\ref{LemmaTaftSimpleNonSemiSimpleClassifyUnity},
 $(\im \varphi)\tilde J = J_{t-1}\tilde J=J(A)\tilde J = 0$.
Hence $(t+1)_\zeta = 0$ and $t=m-1$.
By Lemma~\ref{LemmaTaftSimpleNonSemiSimpleClassifyUnity},
$J(A)=J_{m-2}$.
Thus $\ker v = v^{m-1}\tilde J \cong A/J(A)$.
Now~(\ref{EqQuantumBinomPhi}) implies the theorem.
\end{proof}

\begin{remark}
Since the maximal semisimple subalgebra $\ker v$ is uniquely determined, any two such $H_{m^2}(\zeta)$-simple algebras $A$ are isomorphic as $H_{m^2}(\zeta)$-module algebras if and only if their subalgebras $\ker v$ are isomorphic as $\mathbb Z_m$-algebras. Moreover, all automorphisms of $A$ as an $H_{m^2}(\zeta)$-module algebra are induced by the automorphisms of $\ker v$ as a $\mathbb Z_m$-algebra.
Indeed, let $\psi \colon A \to A$ be an automorphism of $A$ as an $H_{m^2}(\zeta)$-module algebra.
Since $\tilde J = J(A)^{m-1}$, $\psi(\tilde J) = \tilde J$ and $$v^{m-1}\psi(\varphi^{m-1}(a))=\psi(a)
\text{ for all }a\in \ker v$$ implies $$\psi(\varphi^{m-1}(a))=\varphi^{m-1}(\psi(a)).$$
Applying $v^{m-k-1}$, we get $\psi(\varphi^k(a))=\varphi^k(\psi(a))$
for all $a\in\ker v$ and $0 \leqslant k < m$.
\end{remark}


\section{Growth of polynomial $H_{m^2}(\zeta)$-identities}\label{SectionTaftSimpleAmitsur}

Here we apply the results of Section~\ref{SectionTaftSimpleNonSemisimple}
to polynomial $H_{m^2}(\zeta)$-identities.

First we introduce the notion of the free associative $H$-module algebra. Here we follow~\cite{BahturinLinchenko}.
Let $F \langle X \rangle$ be the free associative algebra without $1$
   on the set $X := \lbrace x_1, x_2, x_3, \ldots \rbrace$.
  Then $F \langle X \rangle = \bigoplus_{n=1}^\infty F \langle X \rangle^{(n)}$
  where $F \langle X \rangle^{(n)}$ is the linear span of all monomials of total degree $n$.
   Let $H$ be a Hopf algebra over $F$. Consider the algebra $$F \langle X | H\rangle
   :=  \bigoplus_{n=1}^\infty H^{{}\otimes n} \otimes F \langle X \rangle^{(n)}$$
   with the multiplication $(u_1 \otimes w_1)(u_2 \otimes w_2):=(u_1 \otimes u_2) \otimes w_1w_2$
   for all $u_1 \in  H^{{}\otimes j}$, $u_2 \in  H^{{}\otimes k}$,
   $w_1 \in F \langle X \rangle^{(j)}$, $w_2 \in F \langle X \rangle^{(k)}$.
We use the notation $$x^{h_1}_{i_1}
x^{h_2}_{i_2}\ldots x^{h_n}_{i_n} := (h_1 \otimes h_2 \otimes \ldots \otimes h_n) \otimes x_{i_1}
x_{i_2}\ldots x_{i_n}.$$ Here $h_1 \otimes h_2 \otimes \ldots \otimes h_n \in H^{{}\otimes n}$,
$x_{i_1} x_{i_2}\ldots x_{i_n} \in F \langle X \rangle^{(n)}$. 

Note that if $(\gamma_\beta)_{\beta \in \Lambda}$ is a basis in $H$, 
then $F\langle X | H \rangle$ is isomorphic to the free associative algebra over $F$ with free formal  generators $x_i^{\gamma_\beta}$, $\beta \in \Lambda$, $i \in \mathbb N$.
 We refer to the elements
 of $F\langle X | H \rangle$ as \textit{associative $H$-polynomials}.
 
In addition, $F \langle X | H \rangle$ becomes an $H$-module algebra with the $H$-action
defined by
$h(x^{h_1}_{i_1}
x^{h_2}_{i_2}\ldots x^{h_n}_{i_n})=x^{h_{(1)}{h_1}}_{i_1} x^{h_{(2)}{h_2}}_{i_2}\ldots
x^{h_{(n)}{h_n}}_{i_n}$ for $h\in H$.

Let $A$ be an associative $H$-module algebra.
Any map $\psi \colon X \to A$ has a unique homomorphic extension $\bar\psi
\colon F \langle X | H \rangle \to A$ such that $\bar\psi(h w)=h\psi(w)$
for all $w \in F \langle X | H \rangle$ and $h \in H$.
 An $H$-polynomial
 $f \in F\langle X | H \rangle$
 is an \textit{$H$-identity} of $A$ if $\varphi(f)=0$
for all homomorphisms $\varphi \colon F \langle X | H \rangle \to A$
of algebras and $H$-modules.
 In other words, $f(x_1, x_2, \ldots, x_n)$
 is an $H$-identity of $A$
if and only if $f(a_1, a_2, \ldots, a_n)=0$ for any $a_i \in A$.
 In this case we write $f \equiv 0$.
The set $\Id^{H}(A)$ of all $H$-identities
of $A$ is an $H$-invariant ideal of $F\langle X | H \rangle$.

We denote by $P^H_n$ the space of all multilinear $H$-polynomials
in $x_1, \ldots, x_n$, $n\in\mathbb N$, i.e.
$$P^{H}_n = \langle x^{h_1}_{\sigma(1)}
x^{h_2}_{\sigma(2)}\ldots x^{h_n}_{\sigma(n)}
\mid h_i \in H, \sigma\in S_n \rangle_F \subset F \langle X | H \rangle.$$
Then the number $c^H_n(A):=\dim\left(\frac{P^H_n}{P^H_n \cap \Id^H(A)}\right)$
is called the $n$th \textit{codimension of polynomial $H$-identities}
or the $n$th \textit{$H$-codimension} of $A$.

The analog of Amitsur's conjecture for $H$-codimensions can be formulated
as follows.

\begin{conjecture} There exists
 $\PIexp^H(A):=\lim\limits_{n\to\infty}
 \sqrt[n]{c^H_n(A)} \in \mathbb Z_+$.
\end{conjecture}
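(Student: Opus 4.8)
The plan is to establish the conjectured statement for all finite dimensional $H_{m^2}(\zeta)$-simple algebras $A$ over an algebraically closed field of characteristic $0$, and to show that the Hopf PI-exponent equals $\dim A$. The strategy follows the established machinery for Amitsur-type conjectures, combined with the structural classification obtained in Theorems~\ref{TheoremTaftSimpleSemisimple} and~\ref{TheoremTaftSimpleNonSemiSimpleClassify}. First I would reduce to the two cases given by the classification: either $A$ is semisimple, hence (up to isomorphism of $H_{m^2}(\zeta)$-module algebras) a direct sum of $t\mid m$ copies of $M_k(F)$ with the action given by~(\ref{EqSSTaftSimple1})--(\ref{EqSSTaftSimple2}), or $A$ is non-semisimple and isomorphic to an algebra from Theorem~\ref{TheoremTaftSimpleNonSemiSimplePresent}, built on a simple $\mathbb Z_m$-graded algebra $B$ with $\dim A = m\dim B$.

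The lower bound is the computational core. For the semisimple case one exhibits, for each large $n$, a nonzero multilinear $H$-polynomial evaluation using matrix units together with the skew-derivation $v$ and the group element $c$; here the freedom to place $P$ and $Q$ in the alternating variables lets one realize a ``generic'' trace-like argument forcing $c^H_n(A)\geqslant C n^{-r}(\dim A)^n$ for suitable constants. In the non-semisimple case, the key point is that $\ker v = B = J(A)^{m-1}\oplus\dots$ is visible as the top component $\varphi^{m-1}(B)$, and the quantum-binomial multiplication rule~(\ref{EqQuantumBinomPhi}) lets one multiply arbitrarily many copies of $B$-evaluations without vanishing, because the relevant quantum binomial coefficients $\binom{k+\ell}{k}_\zeta$ are nonzero for $k+\ell < m$; stringing together $\deg \geqslant n$ such factors while keeping the result in the top layer produces the required exponentially many linearly independent multilinear $H$-polynomials, again giving $\PIexp^H(A)\geqslant \dim A$.

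For the upper bound I would pass to the $S_n$-representation $P^H_n/(P^H_n\cap\Id^H(A))$ and bound the sum of its character degrees. The standard device is to view $F\langle X\mid H\rangle$ with $H = H_{m^2}(\zeta)$ finite dimensional as a relatively free algebra with finitely many generators per variable, then use the fact that for a finite dimensional $H$-module algebra the $H$-codimension sequence is bounded by an ordinary codimension sequence of an ordinary algebra of dimension at most $(\dim H)(\dim A)$ — but this crude bound gives only $\dim H\cdot\dim A$, not $\dim A$. To get the sharp exponent one instead works directly: any multilinear $H$-polynomial can be rewritten, using the defining relations of $H_{m^2}(\zeta)$ (the basis $(c^iv^k)$) and the module-algebra axiom, as a linear combination of products of the form $(c^{i_1}v^{k_1}x_{\sigma(1)})\cdots(c^{i_n}v^{k_n}x_{\sigma(n)})$; substituting the explicit action from the classification and using that $v$ raises degree in the radical filtration (so at most $m-1$ factors $v$ can act nontrivially on any single word before annihilating), one obtains that the evaluation lands in a space of dimension controlled by the ordinary codimensions of $M_k(F)$ (semisimple case) or of $B$ together with a polynomially bounded correction from the finitely many nilpotent $\varphi$-layers (non-semisimple case). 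Since $c^{\mathrm{ord}}_n(M_k(F))\sim c\, n^{-r} k^{2n}$ and $k^2 t = \dim A$ resp. $m\dim B = \dim A$, this yields $\limsup\sqrt[n]{c^H_n(A)}\leqslant \dim A$.

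The main obstacle I anticipate is the upper bound in the \emph{non-semisimple} case: unlike in the semisimple situation there is no $H$-invariant Wedderburn decomposition, and the general theorems of~\cite{ASGordienko8, ASGordienko9} explicitly require $H$-invariance of the Jacobson radical, which may fail here since $v$ need not preserve $J(A)$ as an $H_{m^2}(\zeta)$-submodule in the naive sense — rather $J(A) = J_{m-2}$ is $v$-stable only ``up to $\tilde J$''. The resolution is to exploit the very rigid structure from Theorem~\ref{TheoremTaftSimpleNonSemiSimplePresent}: the filtration $0\subset\tilde J\subset J_1\subset\dots\subset J_{m-1}=A$ is $v$-compatible with one-dimensional (over $\mathrm{End}$) shifts $\varphi$, so a multilinear evaluation is determined by how many of the $n$ factors are sent by the action into each layer $v^i\tilde J$; bounding this by a polynomial in $n$ (there are only $m$ layers, distributed among $n$ positions, giving at most $n^{m-1}$ choices) times the ordinary codimension of the top factor $B$ closes the argument. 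Assembling the matching bounds gives existence of the limit and $\PIexp^H(A) = \dim A$, proving the conjecture for this class.
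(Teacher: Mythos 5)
Your overall architecture (split into the semisimple and non-semisimple cases via the classification, trivial-ish upper bound, explicit lower bound, then the Giambruno--Zaicev $S_n$-machinery) matches the paper, and your upper-bound worries are actually unnecessary: the paper uses only the crude injection of $P^H_n/(P^H_n\cap\Id^H(A))$ into the space of multilinear maps $A^{\times n}\to A$, giving $c^H_n(A)\leqslant(\dim A)^{n+1}$, which already has the right exponential rate; no rewriting in the basis $(c^iv^k)$ is needed.

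The genuine gap is in your lower bound for the non-semisimple case. ``Stringing together arbitrarily many $B$-evaluations without vanishing'' can at best certify $\liminf\sqrt[n]{c^H_n(A)}\geqslant\dim B$, not $\dim A=m\dim B$: products of elements of the single layer $\ker v\cong B$ never leave that layer, and the quantum-binomial rule~(\ref{EqQuantumBinomPhi}) does not by itself produce $(m\dim B)^n$ linearly independent multilinear classes. To force the exponent up to $\dim A$ via the standard representation-theoretic argument you need, for each large $n$, a multilinear $H$-polynomial \emph{not} in $\Id^H(A)$ that alternates in $\sim n/(2\dim A)$ disjoint sets of variables \emph{each of cardinality exactly $\dim A=\ell m$}, i.e.\ the alternation must run over a basis of the whole of $A$ including all radical layers $v^i\tilde J$ --- and the radical is precisely where nonzero evaluations are hard to guarantee. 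This is the content of Lemma~\ref{LemmaTaftNSSAltHPolynomial}, which is the paper's one genuinely new analytic ingredient: one decorates the $\ell m$ variables of each alternating set with the operators $1,v,v^2,\dots,v^{m-1}$ (in blocks of size $\ell$) and substitutes elements $v^i a_j$ of a compatible basis, so that $v^m=0$ annihilates every term of the alternation except those respecting the layer structure, leaving $(\ell!)^{2km}1_A\neq 0$. Without this decoration trick (or an equivalent device) your construction does not reach the claimed exponent, so the key step of the proof is missing. (A minor further slip: in the paper's notation $\ker v=v^{m-1}\tilde J\cong A/J(A)$ is the maximal semisimple subalgebra, while $\tilde J=J(A)^{m-1}$ is the deepest radical layer with $\tilde J^2=0$; your identification of $\ker v$ with ``the top component $\varphi^{m-1}(B)$'' inverts this.)
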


In the theorem below we consider the case $H=H_{m^2}(\zeta)$.

\begin{theorem}\label{TheoremTaftSimpleAmitsur} Let $A$ be a finite dimensional $H_{m^2}(\zeta)$-simple
algebra over an algebraically closed field $F$ of characteristic $0$.
Then there exist constants $C > 0$, $r\in \mathbb R$ such that $$C n^{r} (\dim A)^n \leqslant c^{H_{m^2}(\zeta)}_n(A) \leqslant (\dim A)^{n+1}\text{ for all }n \in \mathbb N.$$
\end{theorem}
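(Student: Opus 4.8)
The plan is to prove the two inequalities separately. The upper bound $c^{H_{m^2}(\zeta)}_n(A) \leqslant (\dim A)^{n+1}$ is the easy direction and holds for any finite dimensional $H$-module algebra: the Taft algebra is finite dimensional, so multilinear $H$-polynomials can be reduced, using the $H$-action and a fixed basis of $H_{m^2}(\zeta)$, to $F$-linear combinations of evaluations where each $x_i$ is replaced by a basis element of $A$; thus the value of $f$ on $A$ is determined by $(\dim A)^n$ scalar-valued functionals. More precisely, fixing a basis $e_1,\dots,e_{\dim A}$ of $A$, the restriction map $P^{H_{m^2}(\zeta)}_n \to \Hom(A^{\otimes n}, A)$, $f \mapsto (a_1,\dots,a_n)\mapsto f(a_1,\dots,a_n)$, has kernel exactly $P^{H_{m^2}(\zeta)}_n \cap \Id^{H_{m^2}(\zeta)}(A)$, and its image lies in $\Hom(A^{\otimes n}, A)$, which has dimension $(\dim A)^{n+1}$. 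This gives the right-hand inequality immediately.

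For the lower bound I would invoke the classification obtained in Sections \ref{SectionTaftSimpleSemisimple}--\ref{SectionTaftSimpleNonSemisimple}. There are two cases. If $A$ is semisimple, then by Theorem \ref{TheoremTaftSimpleSemisimple} it is $\mathbb Z_m$-simple, hence in particular a simple graded algebra; the analog of Amitsur's conjecture for $H$-simple algebras whose Jacobson radical is $H$-invariant was established by the author in \cite{ASGordienko8, ASGordienko9}, and since here $J(A)=0$ this applies and yields $\PIexp^{H}(A)=\dim A$, giving a lower bound of the required form $Cn^r(\dim A)^n$. If $J(A)\ne 0$, then by Theorem \ref{TheoremTaftSimpleNonSemiSimpleClassify} the algebra $A$ is one of the algebras of Theorem \ref{TheoremTaftSimpleNonSemiSimplePresent}: $A=\bigoplus_{i=0}^{m-1}W_i$ with $W_0=\ker v = B$ a simple $\mathbb Z_m$-graded algebra of dimension $(\dim A)/m$, and $A$ is spanned by $\varphi^k(b)$, $b\in B$, $0\leqslant k<m$.

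The heart of the argument is the lower bound in the non-semisimple case. Here I would construct, for each $n$, a multilinear $H$-polynomial that is not an $H$-identity of $A$ but whose ``shape'' forces the quotient $P^H_n/(P^H_n\cap\Id^H(A))$ to be large. The idea is to exploit that $\ker v = B$ is an honest simple $\mathbb Z_m$-graded algebra of dimension $d:=\dim B = \dim A/m$: it has, by the graded version of the Giambruno--Zaicev theory, graded (equivalently $\mathbb Z_m$-) codimension growth with $\PIexp = d$, realized via a sequence of multilinear graded polynomials built from alternating sets. One then lifts such polynomials to $H$-polynomials on $A$ by plugging in the top-layer elements $\varphi^{m-1}(b)$ for $b\in B$ and using the multiplication rule $\varphi^{m-1}(a)\varphi^{m-1}(b)=\binom{2m-2}{m-1}_\zeta\varphi^{2m-2}((c^{m-1}a)b)=0$ — but products of length two already vanish on the top layer, so instead one must interleave factors from different layers $W_i$. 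The cleaner route is the one used for $H_4(-1)$ in \cite{ASGordienko11}: show directly that the regular $H$-representation contains enough copies of suitable $S_n$-modules. Concretely, I would pick a nonzero product $w = \varphi^{k_1}(b_1)\cdots\varphi^{k_n}(b_n)$ with $\sum k_i \leqslant m-1$ so that $w = (\text{nonzero scalar})\,\varphi^{\sum k_i}(\text{product in }B) \ne 0$, arrange the $b_i$ to run over a basis of $B$ realizing the graded codimension growth of $B$, and use skew-symmetrization over the ``$B$-slots'' together with the fact that one has $m$ layers $W_0,\dots,W_{m-1}$ each a copy of $B$, to produce roughly $d^n = (\dim A/m)^n$ linearly independent functionals and then multiply back the ``missing'' exponent-$m$ factor of growth by distributing the $n$ variables among the $m$ layers in $\approx m^n$ ways. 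The bookkeeping showing these evaluations stay linearly independent modulo $\Id^H(A)$ — i.e. that distributing variables over the $m$ layers genuinely multiplies the base $d$ up to $md = \dim A$ — is the main obstacle, and it is handled exactly as the corresponding estimate in \cite[Section on codimensions]{ASGordienko11}, using that $A$ as an $H_{m^2}(\zeta)$-module algebra is ``generated'' in a controlled way from $B$ by the single skew-primitive element $v$. Once the bound $c^H_n(A)\geqslant C n^r (\dim A)^n$ is obtained in both cases, the theorem follows, and in particular $\PIexp^{H_{m^2}(\zeta)}(A)$ exists and equals $\dim A$.
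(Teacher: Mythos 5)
Your upper bound and your treatment of the semisimple case are fine and agree in substance with the paper (which simply quotes \cite[Theorem~5]{ASGordienko3} when $J(A)=0$). The gap is in the non-semisimple case, which you rightly call the heart of the matter but do not actually resolve. Your proposed mechanism --- realize $\PIexp=\dim B=\dim A/m$ by graded polynomials on $B=\ker v$ and then ``multiply back'' a factor $m^n$ by distributing the $n$ variables over the $m$ layers $W_0,\dots,W_{m-1}$ --- cannot work at the level of products: by the multiplication rule $\varphi^{k}(a)\varphi^{\ell}(b)=\binom{k+\ell}{k}_\zeta\,\varphi^{k+\ell}((c^{\ell}a)b)$, any nonzero product of $n$ factors has at most $m-1$ factors outside the bottom layer, so there are only polynomially many admissible distributions, not $m^n$. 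The linear-independence ``bookkeeping'' you defer to \cite{ASGordienko11} is exactly the step that fails in this formulation, and the decisive ingredient you are missing is that the extra factor comes from the Hopf action \emph{inside the polynomial} (superscripts $x^{v^{j}}$ on the variables), not from where the substituted values sit.

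Concretely, the paper's Lemma~\ref{LemmaTaftNSSAltHPolynomial} writes $A=\bigoplus_{j=0}^{m-1}v^{j}\tilde J$ with $\dim\tilde J=\ell$ and $\ker v=v^{m-1}\tilde J$ the simple graded part, takes a polynomial $f_0$ (from \cite[Theorem~7]{ASGordienko3}) alternating in sets of size $\ell$ that acts as the identity on $v^{m-1}\tilde J$, decorates the $j$-th block of $\ell$ variables in each big set by $x\mapsto x^{v^{j}}$ for $j=0,\dots,m-1$, and then alternates over the resulting sets of size $\ell m=\dim A$. Substituting the basis $v^{m-1}a_i,\dots,va_i,a_i$, every non-block-preserving permutation places some value $v^{j'}a_i$ into a slot decorated by $v^{j}$ with $j+j'\geqslant m$ (a degree count forces this), and that term dies because $v^{m}=0$; the surviving terms sum to $(\ell!)^{2km}1_A\neq 0$. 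Having non-identities alternating in $k\approx n/(2\ell m)$ disjoint sets of size $\dim A$, the standard $S_n$-cocharacter machinery of \cite{ASGordienko3} then yields the lower bound $Cn^{r}(\dim A)^{n}$. Without this (or an equivalent) construction, your plan does not establish the lower bound.
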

\begin{corollary}
The analog of Amitsur's conjecture holds
 for such codimensions. In particular, $\PIexp^H(A)=\dim A$.
\end{corollary}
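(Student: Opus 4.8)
The plan is to establish the upper and lower bounds separately. The upper bound $c^{H_{m^2}(\zeta)}_n(A)\le(\dim A)^{n+1}$ is the routine one: a multilinear $H$-polynomial is determined, modulo $\Id^{H_{m^2}(\zeta)}(A)$, by the function it induces on $A^n$, and since $A$ is finite-dimensional one evaluates on a fixed basis of $A$. More precisely, if $(\gamma_\beta)$ is a basis of $H_{m^2}(\zeta)$ then every element of $P^{H}_n$ is a linear combination of $x^{\gamma_{\beta_1}}_{\sigma(1)}\cdots x^{\gamma_{\beta_n}}_{\sigma(n)}$, but using that $A$ is an $H$-module algebra one can pull all the Hopf-algebra action down to the level of the (finitely many) basis elements of $A$; since $\dim_F\Hom_F(A^{\otimes n},A)=(\dim A)^{n+1}$, the bound follows. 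This argument is standard (cf.~\cite{ASGordienko8, ASGordienko9}) and I would only sketch it.

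For the lower bound the strategy is to exhibit, for each large $n$, a non-identity multilinear $H$-polynomial built from a ``generic'' alternating construction on $\dim A$ slots, so that the $S_n$-cocharacter contains an irreducible component whose dimension already grows like $Cn^r(\dim A)^n$. The engine is the explicit description of $A$ obtained in Sections~\ref{SectionTaftSimpleSemisimple}--\ref{SectionTaftSimpleNonSemisimple}: in both the semisimple case (Theorem~\ref{TheoremTaftSimpleSemisimple}) and the non-semisimple case (Theorems~\ref{TheoremTaftSimpleNonSemiSimplePresent} and~\ref{TheoremTaftSimpleNonSemiSimpleClassify}) the algebra $A$ has a basis indexed in a way that is ``visible'' through the $H$-action, and one can choose $\dim A$ elements $w_1,\dots,w_{\dim A}$ of $A$, together with suitable Hopf-algebra coefficients $h\in H_{m^2}(\zeta)$ (powers of $c$ to select $\mathbb Z_m$-components, and $\varphi$-level shifts realized through $v$), such that the product $w_1w_2\cdots w_{\dim A}$ is a nonzero fixed element. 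Then one forms the polynomial
\begin{equation*}
f_n = \sum_{\sigma\in S_{\dim A}} (\sign\sigma)\, x^{h_1}_{\sigma(1)} x^{h_2}_{\sigma(2)}\cdots x^{h_{\dim A}}_{\sigma(\dim A)}\,\cdot\, (\text{a product of }n-\dim A\text{ further monomials})
\end{equation*}
and checks $f_n\not\equiv 0$ on $A$ by plugging in $w_i$ for the alternated variables and the unity (or an idempotent) for the rest; nonvanishing uses that a product of $\dim A$ distinct basis elements in the right order is nonzero while repetitions force zero, exactly as in the ordinary Regev-type argument. From $f_n\notin\Id^H(A)$ one passes, via the representation theory of $S_n$ acting on $P^H_n/(P^H_n\cap\Id^H(A))$, to the estimate $c^H_n(A)\ge \dim M(\lambda)$ for a partition $\lambda$ of $n$ with first column of height $\dim A$ and a long first row, and the hook/dimension formula for $S_n$-characters gives $\dim M(\lambda)\ge Cn^r(\dim A)^n$.

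The main obstacle is the nonvanishing claim $f_n\not\equiv 0$ in the non-semisimple case, where $A$ is not a direct sum of matrix algebras but the algebra of Theorem~\ref{TheoremTaftSimpleNonSemiSimplePresent} with the twisted multiplication $\varphi^k(a)\varphi^\ell(b)=\binom{k+\ell}{k}_\zeta\varphi^{k+\ell}((c^\ell a)b)$; the quantum binomial coefficients can vanish, so one must choose the levels $k,\ell$ and the $\mathbb Z_m$-degrees of the test elements so that all the coefficients appearing in the relevant product are nonzero, and simultaneously so that the ``grading'' plus ``$\varphi$-level'' data genuinely distinguishes the $\dim A$ chosen basis vectors (so that only the identity permutation survives). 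Here I would use that $\ker v=B$ is a simple $\mathbb Z_m$-graded algebra, pick inside $B$ a set of matrix units adapted to the elementary grading (Remark after Theorem~\ref{TheoremTaftSimpleSemisimple}), and then lift them to all $m$ levels via $\varphi$, arranging the test evaluation so the product telescopes to $\binom{m-1}{\,\cdot\,}_\zeta$-type coefficients that are nonzero because $\zeta$ is a primitive $m$th root of unity. Once the combinatorial bookkeeping of which $(k,\ell,\deg)$-triples are admissible is set up, the nonvanishing and hence the lower bound follow; the semisimple case is the same argument but easier, since there the multiplication is untwisted.
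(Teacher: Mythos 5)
There is a genuine gap in your lower bound, and it is quantitative rather than merely technical. Your candidate non-identity $f_n$ alternates in a \emph{single} set of $\dim A$ variables, and you then claim $c^H_n(A)\geqslant\dim M(\lambda)$ for a partition $\lambda\vdash n$ ``with first column of height $\dim A$ and a long first row.'' Such a partition is (essentially) a hook $\lambda=(n-d+1,1^{d-1})$ with $d=\dim A$, and by the hook length formula $\dim M(\lambda)=\binom{n-1}{d-1}$, which is polynomial in $n$ for fixed $d$ --- nowhere near $C n^{r}(\dim A)^n$. To force an irreducible $S_n$-component of dimension of order $(\dim A)^n$ one needs $\lambda$ to contain a $d\times k$ rectangle with $dk$ close to $n$, and for that one must exhibit a non-identity alternating in roughly $n/d$ \emph{disjoint} sets of $d$ variables each, covering essentially all $n$ variables. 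This is exactly what the paper's Lemma~\ref{LemmaTaftNSSAltHPolynomial} supplies: $2k$ disjoint alternating sets of size $\ell m=\dim A$ with $2k\ell m=n-n_0$. One alternating set, however cleverly evaluated, cannot yield exponent $\dim A$; this is the same reason the ordinary Giambruno--Zaicev argument uses high powers of central polynomials rather than a single alternation.

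Your instincts about \emph{how} to make one such alternating block nonvanishing on the non-semisimple algebra are close to the paper's: the paper does not hand-pick matrix units and track quantum binomial coefficients, but instead invokes \cite[Theorem~7]{ASGordienko3} to get a polynomial $f_0$ alternating in many sets of size $\ell=\dim\tilde J$ that acts as the identity operator on the $\mathbb Z_m$-simple subalgebra $v^{m-1}\tilde J=\ker v$, and then extends each alternating set from $\ell$ to $\ell m$ variables by decorating the extra variables with superscripts $v, v^2,\ldots, v^{m-1}$; substituting $v^{m-1-j}a_i$ into the variables carrying $v^j$ reproduces the basis of $\ker v$, while $v^m=0$ kills all off-diagonal terms of the alternation, so the value is $(\ell!)^{2km}1_A\ne 0$ with no quantum binomial bookkeeping needed. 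Once Lemma~\ref{LemmaTaftNSSAltHPolynomial} is in place, the passage to $c^H_n(A)\geqslant Cn^r(\dim A)^n$ and the upper bound $(\dim A)^{n+1}$ are as you describe (and as in \cite[Lemma~11 and Theorem~5]{ASGordienko3}); the corollary then follows by taking $n$th roots in Theorem~\ref{TheoremTaftSimpleAmitsur}. So the missing idea is precisely the many-alternating-sets construction; without it the argument proves only a polynomial lower bound.
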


In order to prove Theorem~\ref{TheoremTaftSimpleAmitsur}, we need one lemma.

Let $k\ell \leqslant n$ where $k,n \in \mathbb N$ are some numbers.
 Denote by $Q^H_{\ell,k,n} \subseteq P^H_n$
the subspace spanned by all $H$-polynomials that are alternating in
$k$ disjoint subsets of variables $\{x^i_1, \ldots, x^i_\ell \}
\subseteq \lbrace x_1, x_2, \ldots, x_n\rbrace$, $1 \leqslant i \leqslant k$.

\begin{lemma}\label{LemmaTaftNSSAltHPolynomial}
Let $A$ be an $H_{m^2}(\zeta)$-simple non-semisimple associative algebra over an 
algebraically closed field $F$
of characteristic $0$, $\dim A=\ell m$.
Then there exists a number $n_0 \in \mathbb N$ such that for every $n\geqslant n_0$
there exist disjoint subsets $X_1$, \ldots, $X_k \subseteq \lbrace x_1, \ldots, x_n
\rbrace$, $k = \left[\frac{n-n_0}{2\ell m}\right]$,
$|X_1| = \ldots = |X_{k}|=\ell m$ and a polynomial $f \in P^{H_{m^2}(\zeta)}_n \backslash
\Id^{H_{m^2}(\zeta)}(A)$ alternating in the variables of each set $X_j$.
\end{lemma}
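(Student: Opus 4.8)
The plan is to build the alternating polynomial directly from the multiplicative structure of $A$ described in Theorem~\ref{TheoremTaftSimpleNonSemiSimpleClassify}. By that theorem $A = \bigoplus_{k=0}^{m-1}\varphi^k(B)$ where $B=\ker v$ is a simple $\mathbb Z_m$-graded algebra, $\dim B = \ell$, and $\varphi^k(a)\varphi^\ell(b)=\binom{k+\ell}{k}_\zeta\varphi^{k+\ell}((c^\ell a)b)$. Since $B\cong M_d(F)$ as an ungraded algebra (the field is algebraically closed, $\ell=d^2$), $B$ as an ordinary PI-algebra has, by the Giambruno--Zaicev machinery~\cite{ZaiGia}, a nonzero multilinear polynomial $f_0$ that is a \emph{central} polynomial alternating in several disjoint sets of $\ell=d^2$ variables each and evaluating to the identity matrix on a suitable substitution. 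First I would fix one such evaluation in $B$: choose elements $w_1,\dots,w_{d^2}$ of $B$ forming a basis such that the standard/regular alternating construction produces $1_B$.

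The key step is to promote this to an $H$-identity-avoiding polynomial on all of $A$. I would take the $H$-polynomial obtained from $f_0$ by applying $v^k$ to the variable slots to reach the components $\varphi^k(B)$: concretely, substitute into a single ``long'' alternating $H$-polynomial variables of the form $x^{v^{k_i}}$ so that on the evaluation the $i$-th variable lands in $\varphi^{k_i}(w_{j})$. Because $v\varphi^k = \varphi^{k-1}$ up to a term in $\tilde J=J(A)^{m-1}$ (which annihilates everything in the relevant products), the action of the skew-derivation lets us ``read off'' each homogeneous layer $\varphi^k(B)$ from $B$. The multilinear alternating polynomial that I want is then a product, over $k=0,1,\dots,m-1$, of a block alternating in a set of $\ell$ variables substituted to span $\varphi^k(B)$; the quantum binomial coefficients $\binom{k+\ell}{k}_\zeta$ appearing in the multiplication are the only possible source of degeneracy, so I must choose the order of the blocks (i.e.\ which power $\varphi^k$ comes first) so that the relevant product of quantum binomials is nonzero — since $\mathrm{char}\,F=0$ and all indices are $<m$, $\binom{k+\ell}{k}_\zeta\neq 0$ whenever $k+\ell<m$ in the ``$\zeta$-factorial'' sense, and one arranges the evaluation so that this is respected (this is exactly the kind of bookkeeping already done in the proof of Theorem~\ref{TheoremTaftSimpleNonSemiSimpleClassify}). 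This yields a polynomial alternating in $m$ disjoint sets of $\ell$ variables, i.e.\ in a configuration ready to be alternated over $\ell m = \dim A$ variables after one further antisymmetrization.

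Once one nonzero polynomial $g\in P^{H}_{n_0}\setminus \Id^{H}(A)$ alternating in a set of $\ell m$ variables (on which it takes the value $1_A$) is in hand, the general $n$ follows by a standard padding/iteration argument: multiply $k=\left[\frac{n-n_0}{2\ell m}\right]$ independent copies of $g$ in fresh disjoint variable sets (inserting $\ell m$ further ``spacer'' variables between consecutive copies, which accounts for the factor $2\ell m$ and the extra $n_0$), and evaluate all copies at the same substitution realizing $1_A$; then $1_A\cdot 1_A\cdots 1_A = 1_A\neq 0$, so the product is not an $H$-identity, while it is alternating in each of the $k$ sets $X_j$ of size $\ell m$ by construction. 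Choosing $n_0$ large enough to absorb the length of $g$ plus one spacer block gives the stated bound on $k$. The main obstacle I anticipate is the second step — verifying that the quantum-binomial coefficients do not collapse the alternating block for the top layers $\varphi^{m-1}(B)$ (where $\varphi^{m-1}(a)\varphi^{1}(b)$ already involves $\binom{m}{m-1}_\zeta=0$); the resolution is to never multiply two blocks whose $\varphi$-degrees sum to $\geq m$ inside the alternating part, arranging instead that each block lives in a single layer and the inter-block products land in $J(A)$ only through the spacer variables, whose evaluation we are free to choose to avoid the vanishing coefficients.
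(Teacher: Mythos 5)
Your first ingredient is essentially the right one, but two things go wrong. First, a small but real point: $B=\ker v\cong A/J(A)$ is a \emph{simple $\mathbb Z_m$-graded} algebra, which over an algebraically closed field need not be $M_d(F)$ --- by Theorem~\ref{TheoremTaftSimpleSemisimple} it can be $M_q(F)^{\oplus t}$ with $c$ permuting the summands, so $\ell=\dim B$ need not be a square and an ordinary central polynomial for $M_d(F)$ is not available. What the paper uses instead is \cite[Theorem~7]{ASGordienko3}, which supplies a \emph{graded} polynomial $f_0$, alternating in $2km$ sets of $\ell$ variables, that evaluates to $1_A$ on repeated copies of a basis of $\ker v$. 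This is fixable, but it is not the ordinary Giambruno--Zaicev central polynomial.

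The serious gap is in the promotion step. Your plan is to take, for each layer $\varphi^k(B)$, a block of $\ell$ variables evaluated \emph{inside that layer} and then multiply the blocks. But $J(A)=\bigoplus_{k\geqslant 1}\varphi^k(B)$ is nilpotent: by~(\ref{EqQuantumBinomPhi}), $\varphi^{k_1}(a)\varphi^{k_2}(b)\in\varphi^{k_1+k_2}(B)$ and vanishes once $k_1+k_2\geqslant m$. A product containing one factor from each layer already lies in $\varphi^{0+1+\dots+(m-1)}(B)=\varphi^{m(m-1)/2}(B)=0$ for $m\geqslant 3$, and a block of $\ell\geqslant 2$ factors from a single layer $k\geqslant 1$ lies in $\varphi^{k\ell}(B)$, which is also typically zero. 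So the obstruction you flag at the end is not a bookkeeping issue to be arranged around --- your construction evaluates to $0$ identically. The paper's mechanism is the reverse of yours: one attaches the formal superscript $v^{s}$ to the $s$-th group of $\ell$ variables and substitutes the radical elements $v^{m-1-s}a_j$ there, so that \emph{after} the $H$-action every slot of $f_0$ receives an element of $\ker v=v^{m-1}\tilde J$ and all actual multiplications happen inside the semisimple part (no quantum binomials appear at all). The full alternation over each set of $\ell m$ variables then survives because $v^m=0$: for any bijective assignment of the basis $\{v^p a_j\}$ to the slots, the total $\varphi$-degree after applying the superscripts is $0$, so either some slot has negative degree (and the term dies since $v^{s+p}=0$ for $s+p\geqslant m$) or every slot lands exactly in $\ker v$, which forces the diagonal assignment and yields $(\ell!)^{2km}1_A$. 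Without this ``lift everything into $\ker v$ before multiplying'' device, neither the nonvanishing nor the alternation in $\ell m=\dim A$ variables is established.
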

\begin{proof}
By Theorem~\ref{TheoremTaftSimpleNonSemiSimpleClassify}, $A=\bigoplus_{i=0}^{m-1} v^i \tilde J$ (direct sum of subspaces) where $\tilde J^2=0$ and $v^{m-1}\tilde J$ is a $\mathbb Z_m$-simple subalgebra.

Fix the basis $a_1, \ldots, a_\ell;\ va_1, \ldots, va_\ell; \ldots; v^{m-1}a_1, \ldots, v^{m-1}a_\ell$
in $A$ where $a_1, \ldots, a_\ell$ is a basis in $\tilde J$.

Since $v^{m-1}\tilde J$ is a $\mathbb Z_m$-simple subalgebra, by~\cite[Theorem~7]{ASGordienko3}, there exist $T \in \mathbb Z_+$ and  $\bar z_1, \ldots, \bar z_T \in v^{m-1}\tilde J$ such that
for any $k \in \mathbb N$
there exists $$f_0=f_0(x_1^1, \ldots, x_\ell^1; \ldots;
x^{2k}_1, \ldots,  x^{2km}_\ell;\ z_1, \ldots, z_T;\ z) \in Q^{F\mathbb Z_m}_{\ell, 2km, 2k\ell m+T+1}$$
such that for any $\bar z \in v^{m-1} \tilde J$ we have
$$f_0(v^{m-1} a_1, \ldots, v^{m-1} a_\ell; \ldots;
v^{m-1} a_1, \ldots, v^{m-1} a_\ell; \bar z_1, \ldots, \bar z_T; \bar z) = \bar z.$$

Take $n_0=T+1$, $k=\left[\frac{n-n_0}{2\ell m}\right]$, and consider \begin{equation*}\begin{split}f(x_1^1, \ldots, x_{\ell m}^1; \ldots;
x^{2k}_1, \ldots,  x^{2k}_{\ell m};\ z_1, \ldots, z_T; \ z;\ y_1, \ldots, y_{n-2k\ell m-T-1})=\\
\Alt_1 \Alt_2 \ldots \Alt_{2k}
f_0(x_1^1, \ldots, x_{\ell}^1;\ \left(x_{\ell+1}^1\right)^v, \ldots, \left(x_{2\ell}^1\right)^v;
\ \left(x_{2\ell+1}^1\right)^{v^2}, \ldots, \left(x_{3\ell}^1\right)^{v^2}; 
 \ldots;\\ \left(x_{\ell(m-1)+1}^1\right)^{v^{m-1}}, \ldots, \left(x_{\ell m}^1\right)^{v^{m-1}}; \ldots; \\
x_1^{2k}, \ldots, x_{\ell}^{2k};\ \left(x_{\ell+1}^{2k}\right)^v, \ldots, \left(x_{2\ell}^{2k}\right)^v;
\ \left(x_{2\ell+1}^{2k}\right)^{v^2}, \ldots, \left(x_{3\ell}^{2k}\right)^{v^2}; 
 \ldots;\\ \left(x_{\ell(m-1)+1}^{2k}\right)^{v^{m-1}}, \ldots, \left(x_{\ell m}^{2k}\right)^{v^{m-1}};\ z_1, \ldots, z_T;\ z)\ y_1 y_2 \ldots y_{n-2k\ell m-T-1}\in P^{H_{m^2}(\zeta)}_n\end{split}\end{equation*}
where $\Alt_i$ is the operator of alternation on the set $X_i:=\lbrace 
x_1^i, \ldots, x_{\ell m}^i\rbrace$.

Now we notice that \begin{equation*}\begin{split} f(v^{m-1}a_1, \ldots, v^{m-1}a_{\ell},\ \ldots\ , va_1, \ldots, va_{\ell},\ a_1, \ldots, a_{\ell};\ \ldots\ ; \\ v^{m-1} a_1, \ldots, v^{m-1} a_{\ell},\ \ldots\ ,
va_1, \ldots, va_{\ell},\ a_1, \ldots, a_{\ell};\ \bar z_1, \ldots, \bar z_T; \ 1_A, \ldots, 1_A)
=(\ell!)^{2k m} 1_A\end{split}\end{equation*} since $v^m=0$.
The lemma is proved.
\end{proof}

\begin{proof}[Proof of Theorem~\ref{TheoremTaftSimpleAmitsur}.]
If $A$ is semisimple, then Theorem~\ref{TheoremTaftSimpleAmitsur}
follows from~\cite[Theorem~5]{ASGordienko3}. If $A$ is not semisimple, we 
repeat verbatim the proof of~\cite[Lemma~11 and Theorem~5]{ASGordienko3}
using Lemma~\ref{LemmaTaftNSSAltHPolynomial} instead of~\cite[Lemma~10]{ASGordienko3}
and~\cite[Lemma~4]{ASGordienko3} instead of~\cite[Theorem~6]{ASGordienko3}.
\end{proof}

\section*{Acknowledgements}

I am grateful to E.~Jespers and E.~Aljadeff for helpful discussions.

\end{document}